\newtheorem{theorem}{Theorem}[section]
\newtheorem{lemma}[theorem]{Lemma}
\newtheorem{proposition}[theorem]{Proposition}
\newtheorem{corollary}[theorem]{Corollary}
\theoremstyle{definition}
\newtheorem{definition}[theorem]{Definition}
\newtheorem{remark}[theorem]{Remark}
\newtheorem{example}[theorem]{Example}
\DeclareMathOperator{\Ext}{Ext}
\DeclareMathOperator{\Hom}{Hom}
\newcommand{\cat}[1]{\mathcal{#1}}           
\newcommand{\tensor}{\otimes}
\newcommand{\class}[1]{\mathcal{#1}}   
\newcommand{\Z}{\mathbb{Z}}
\newcommand{\ch}{\textnormal{Ch}(R)}
\newcommand{\cha}[1]{\textnormal{Ch}(\mathcal{#1})}
\newcommand{\tilclass}[1]{\widetilde{\class{#1}}}
\newcommand{\dgclass}[1]{dg\widetilde{\class{#1}}}
\newcommand{\dwclass}[1]{dw\widetilde{\class{#1}}}
\newcommand{\exclass}[1]{ex\widetilde{\class{#1}}}
\newcommand{\rightperp}[1]{#1^{\perp}}
\newcommand{\leftperp}[1]{{}^\perp #1}
\newcommand{\homcomplex}{\mathit{Hom}}
\begin{document}

\title[K-flat complexes in Grothendieck categories]{K-flatness in Grothendieck categories: Application to quasi-coherent sheaves}

\author{Sergio Estrada}
 \address{S.E. \ Universidad de
  Murcia\\ Departamento de Matemáticas \\ Campus de Espinardo \\ Murcia 30100, Spain}
\email[Sergio Estrada]{sestrada@um.es}
\urladdr{}

\author{James Gillespie}
\address{J.G. \ Ramapo College of New Jersey \\
         School of Theoretical and Applied Science \\
         505 Ramapo Valley Road \\
         Mahwah, NJ 07430\\ U.S.A.}
\email[Jim Gillespie]{jgillesp@ramapo.edu}
\urladdr{http://pages.ramapo.edu/~jgillesp/}

\author{Sinem Odabaşı}
 \address{S.O. \ Universidad de
  Murcia\\ Departamento de Matemáticas \\ Campus de Espinardo \\ Murcia 30100, Spain}
\email[Sinem Odabaşı]{sinem.odabasi@um.es}
\urladdr{}

\date{\today}


\thanks{2020 Mathematics Subject Classification. 18N40, 18G35, 18G25}

\thanks{The first and the third named authors were partly supported by grant
  PID2020-113206GB-I00 funded by MCIN/AEI/10.13039/ 501100011033 and by grant 22004/PI/22 funded by Fundaci\'on S\'eneca.}

\begin{abstract}
Let $(\cat{G},\otimes)$ be any closed symmetric monoidal Grothendieck category. We show that K-flat covers exist universally in the category of chain complexes and that the Verdier quotient of $K(\cat{G})$ by the K-flat complexes is always a well generated triangulated category. Under the further assumption that $\cat{G}$ has a set of $\otimes$-flat generators we can show more: (i) The category is in recollement with the $\otimes$-pure derived category and the usual derived category, and (ii) The usual derived category is the homotopy category of a cofibrantly generated and monoidal model structure whose cofibrant objects are precisely the K-flat complexes. We also give a condition guaranteeing that the right orthogonal to K-flat is precisely the acyclic complexes of $\otimes$-pure injectives. We show this condition holds for quasi-coherent sheaves over a quasi-compact and semiseparated scheme.
\end{abstract}

\maketitle

\section{Introduction}\label{sec-intro}

In~\cite{estrada-gillespie-odabasi} we defined and studied the pure derived category of a quasi-compact and semiseparated scheme. An important point is that there are two natural  notions of purity in the category of quasi-coherent sheaves over such a scheme. First, there is the categorical purity arising from the fact that this category is locally finitely presented. The second notion is obtained by considering the short exact sequences that remain exact upon tensoring with any quasi-coherent sheaf. These two notions of purity coincide for affine schemes, but typically they differ. In general, the more natural notion of purity for quasi-coherent sheaves is the second one, for it is equivalent to having purity on the stalks and this notion is in agreement with the local nature of the tensor product and flatness for sheaves. Hence we call this the \emph{geometric} purity while the former is the \emph{categorical} purity. Motivated by studying the geometric pure derived category of a scheme, we introduced in~\cite{estrada-gillespie-odabasi} the $\otimes$-pure derived category of a general closed symmetric monoidal Grothendieck category $(\cat{G},\otimes)$. Recall that a Grothendieck category is a cocomplete abelian category $\cat{G}$ with a set of generators and such that direct limits of monomorphisms are again monomorphisms. In practice, the category $\cat{G}$ often possesses a tensor product $\otimes$ providing the formal structure of a closed symmetric monoidal category.

Now given any closed symmetric monoidal Grothendieck category $(\cat{G},\otimes)$, all of this structure  lifts to chain complexes, resulting in a closed symmetric monoidal structure $(\cha{G},\otimes)$ on the category of complexes. Spaltenstein's notion of a K-flat complex from~\cite{spaltenstein} makes sense in this general setting. A chain complex $X$ is said to be K-flat if $E \otimes X$ is an acyclic (i.e. exact) chain complex whenever $E$ is acyclic. In other words, the functor $- \otimes X$ preserves acyclicity.
Given a ring $R$, Emmanouil studied in~\cite{emmanouil-K-flatness-and-orthogonality-in-homotopy-cats} the Verdier quotient of the chain homotopy category, $K(R)$, by the thick class of all K-flat complexes. He shows it to be equivalent to $K_{ac}(\class{PI})$, the chain homotopy category of all acyclic complexes of pure-injective $R$-modules. In this paper we prove an analog of this for quasi-coherent sheaves, using the geometric pure injective (i.e. $\otimes$-pure injective) quasi-coherent sheaves.

But first we prove some results of interest that hold in the full generality of any  closed symmetric monoidal Grothendieck category $(\cat{G},\otimes)$. Let $\class{KF}$ denote the class of all K-flat chain complexes. First, Corollary~\ref{corollary-K-flat-covers} shows that $\class{KF}$ is a covering class in the category $\cha{G}$ of chain complexes. In particular, it is a special precovering class: For any complex $Y$ there is a K-flat complex $X$ and a surjective chain map $p : X \xrightarrow{} Y$ such that any other chain map $f : X' \xrightarrow{} Y$,  with $X'$ K-flat, lifts over $p$.
The second result concerns the Verdier quotient of $K(\cat{G})$, the chain homotopy category of $\cat{G}$, by the class $\class{KF}$. Theorem~\ref{theorem-K-derived-category} implies that the Verdier quotient, $K(\cat{G})/\class{KF}$, is always at least a well-generated (if not compactly generated) triangulated category. In particular, the above results hold for any scheme $\mathbb{X}$. That is, any chain complex of quasi-coherent sheaves has a K-flat cover, and $K(\mathbb{X})/\class{KF}$ is always well generated.

Theorem~\ref{theorem-K-derived-category} actually constructs a cofibrantly generated  abelian model structure on $\cha{G}$ whose weak equivalences are precisely the chain maps whose mapping cone is a  K-flat complex. Every complex is cofibrant in this model structure and the fibrant objects are certain complexes with $\otimes$-pure injective components. This model was constructed on chain complexes of modules over a ring $R$ in \cite{gillespie-K-flat}. But constructing the model structure in the present generality requires us to expand upon techniques from~\cite{estrada-gillespie-odabasi} which, in particular, utilize the theory of $\lambda$-purity from~\cite{AR}.

In Section~\ref{sec-flat-generators}, we are able to say more under the assumption that  $\cat{G}$ possesses a generating set consisting of $\otimes$-flat objects. An object $F \in \cat{G}$ is $\otimes$-flat if $F\otimes -$ is an exact functor. With the presence of $\otimes$-flat generators for $\cat{G}$, we display a recollement linking $K(\cat{G})/\class{KF}$ to the usual derived category, $\class{D}(\cat{G})$, and the $\otimes$-pure derived category, $\class{D}_{\otimes\textnormal{-pur}}(\cat{G})$, which was introduced in~\cite{estrada-gillespie-odabasi}.  See Theorem~\ref{theorem-recollement}.
Related to this, we see in Theorem~\ref{theorem-K-flat-model-derived-cat} the existence of a monoidal model structure whose homotopy category is $\class{D}(\cat{G})$. This model is equivalent to the usual flat model structure but in some sense it is more natural because every K-flat complex is cofibrant, not just the ones with $\otimes$-flat components.

Finally, Section~\ref{sec-right-orthogonal} focuses on the central example of quasi-coherent sheaves over a quasi-compact semiseparated scheme $\mathbb{X}$. With the help of a technical but useful result concerning cotorsion pairs and direct limits (Theorem~\ref{theorem-direct-limits}) we imitate the {\v{C}}ech resolution  argument from~\cite[Theorem 3.3]{cet-G-flat-stable-scheme}, to obtain the wanted analog of Emmanouil's result concerning the Verdier quotient $K(R)/\class{KF}$: The right Ext-orthogonal to $\class{KF}$ is precisely the class of all acyclic complexes of geometric pure injective ($\otimes$-pure injective) quasi-coherent sheaves. It follows that the Verdier quotient $K(\mathbb{X})/\class{KF}$ is equivalent to $K_{ac}(\class{PI}_{\otimes})$, the chain homotopy category of all acyclic complexes of $\otimes$-pure injectives; see  Theorem~\ref{theorem-qc-sheaves}. We also consider the affine case $\mathbb{X} = \text{Spec}(R)$. Here, taking advantage of having not only enough flats but utilizing that we also  have enough pure-projectives and that geometric and categorical purity agree in this case, we see that there is another  abelian model structure for $K(R)/\class{KF}$. Its  cofibrant objects are the chain complexes of pure projective modules. The resulting homotopy category reflects an  equivalence of $K(R)/\class{KF}$ with the Verdier quotient of $K(R)$ by Emmanouil and Kaperonis' \emph{K-absolutely pure} complexes, introduced in~\cite{emmanouil-kaperonis-K-flatness-pure}.

\section{Purity in symmetric monoidal Grothendieck categories}\label{sec-pure exact struc}

Throughout this section we let $(\cat{G}, \otimes)$ be a closed symmetric monoidal Gro\-then\-dieck category. Recall that $\cat{G}$ is locally $\lambda$-presentable (in the sense of~\cite{AR}) for some regular cardinal $\lambda$, and, as described in~\cite[Section~2]{estrada-gillespie-odabasi}, there are two generally different notions of purity. Let $\mathcal{P}_{\lambda}$ denote the proper class of $\lambda$-pure short exact sequences in $\cat{G}$, and $\mathcal{P}_{\otimes}$ denote the proper class of $\otimes$-pure short exact sequences in $\cat{G}$. We have the containment $\mathcal{P}_{\lambda} \subseteq \mathcal{P}_{\otimes}$; see~\cite[Remark~2.8]{estrada-gillespie-odabasi}, and the book~\cite{AR} is the standard reference for locally $\lambda$-presentable categories and $\lambda$-purity.

We will denote by $\cha{G}_{\otimes}$ the exact structure consisting of $\cha{G}$, the category of $\cat{G}$-chain complexes, along with the componentwise $\tensor$-pure exact sequences. The associated Yoneda Ext group of all (equivalence classes of) degreewise $\tensor$-pure short exact sequences will be denoted by $\Ext^1_{\otimes}(X,Y)$.   Note that since $X \otimes -$ preserves direct limits for any object $X\in\cat{G}$, any transfinite composition of $\otimes$-pure monomorphisms is again a $\otimes$-pure monomorphism.

 The following result tells us that we can use $\Ext^1_{\otimes}$ to construct complete cotorsion pairs in $\cha{G}_{\otimes}$ by the usual method of cogenerating by a set.  To state it, let $\lambda$ be some regular cardinal for which $\cat{G}$ is locally $\lambda$-presentable. Let $\{L_i\}_{i\in \Lambda}$ be a set of representatives of all the isomorphism classes of $\lambda$-presented objects.  This is a generating set for the exact category $(\cat{G}, \mathcal{P}_{\lambda})$, meaning every object is a $\lambda$-pure epimorphic image of some direct sum of copies of some $L_i$. Since any $\lambda$-pure epic is also a $\otimes$-pure epic we see that $\{L_i\}$ is also a generating set for the exact structure $(\cat{G}, \mathcal{P}_{\otimes})$ \cite[Prop. 2.9]{estrada-gillespie-odabasi}. By taking all their $n$-disks ($n \in \Z$), this lifts to a set $\{D^n(L_i)\}$ of generators for the exact category $\cha{G}_\otimes$.

\begin{lemma}\label{cogen-by-set-complete}
Let $\class{S}$ be a set (not a proper class) of chain complexes such that $\leftperp{(\rightperp{\class{S}})}$ contains the generating set $\{D^n(L_i)\}$ for the exact category $\cha{G}_\otimes$. Then $(\leftperp{(\rightperp{\class{S}})}, \rightperp{\class{S}})$ is a complete cotorsion pair in the exact category $\cha{G}_\otimes$.  Moreover, the class $\leftperp{(\rightperp{\class{S}})}$ consists precisely of direct summands (retracts) of transfinite degreewise $\tensor$-pure extensions of complexes in $\class{S}\cup \{D^n(L_i)\}$.
\end{lemma}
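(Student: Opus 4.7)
The plan is to adapt the classical Eklof-Trlifaj construction of a cotorsion pair cogenerated by a set, but working inside the exact category $\cha{G}_\otimes$ rather than an abelian category. First I would verify that $\cha{G}_\otimes$ satisfies the standard hypotheses for the small object argument to go through: it is weakly idempotent complete (inherited from the abelian structure on $\cha{G}$), transfinite compositions of admissible monomorphisms are admissible (as recorded in the paragraph preceding the statement, using that $X \otimes -$ preserves direct limits and that direct limits of monomorphisms are monomorphisms in a Grothendieck category), a generating set $\{D^n(L_i)\}$ is available by hypothesis, and sufficient smallness is present because $\cat{G}$ is locally $\lambda$-presentable.

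With these prerequisites in hand, Eklof's lemma in its exact-category formulation ensures that $\rightperp{\class{S}}$ is closed under transfinite degreewise $\tensor$-pure extensions. Running the small object argument on the set of generating cofibrations associated to $\class{S} \cup \{D^n(L_i)\}$ then yields, for every $Y \in \cha{G}$, an admissible short exact sequence
\[
0 \to Y \to R \to F \to 0
\]
in $\cha{G}_\otimes$ with $R \in \rightperp{\class{S}}$ and $F$ a transfinite degreewise $\tensor$-pure extension of complexes in $\class{S} \cup \{D^n(L_i)\}$; in particular $F \in \leftperp{(\rightperp{\class{S}})}$. This is the special preenvelope half of completeness.

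For the special precover half, I would use the generating set together with the standard pushout trick. Given $Y$, pick an admissible $\tensor$-pure epic $G \twoheadrightarrow Y$ with $G$ a direct sum of objects $D^n(L_i)$, so $G \in \leftperp{(\rightperp{\class{S}})}$ by the hypothesis on $\class{S}$. Let $K$ be its kernel, apply the preenvelope construction to $K$ to get $0 \to K \to R \to F \to 0$, and form the pushout along $K \to G$. The result is an admissible sequence $0 \to R \to X \to Y \to 0$ in $\cha{G}_\otimes$ with $R \in \rightperp{\class{S}}$, while $X$ sits in an admissible sequence $0 \to G \to X \to F \to 0$, so $X \in \leftperp{(\rightperp{\class{S}})}$ by Eklof's lemma again.

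For the structural description of $\leftperp{(\rightperp{\class{S}})}$: any $Y$ in this class splits off of its precover (since $\Ext^1_\otimes(Y,R) = 0$), so $Y$ is a retract of $X$, and $X$ itself is a transfinite degreewise $\tensor$-pure extension of complexes in $\class{S} \cup \{D^n(L_i)\}$. The reverse containment is immediate from the closure of $\leftperp{(\rightperp{\class{S}})}$ under retracts and under transfinite admissible extensions. The main technical obstacle, I expect, is verifying the smallness and generating hypotheses relative to the $\tensor$-pure exact structure, which has strictly fewer admissible epics than the usual abelian structure on $\cha{G}$; this is precisely where the $\lambda$-purity machinery of \cite{AR}, as deployed in \cite{estrada-gillespie-odabasi}, does the heavy lifting.
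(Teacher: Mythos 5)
Your argument is correct and in substance the same as the paper's: the paper simply invokes \cite[Corollary~2.15]{saorin-stovicek} after noting (via \cite[Lemma~3.6]{estrada-gillespie-odabasi}) that $\cha{G}_\otimes$ is an efficient exact category, and the proof of that cited result is exactly the small-object-argument, Eklof's lemma, and Salce pushout package you describe. One slip to correct: Eklof's lemma gives closure of the \emph{left} orthogonal $\leftperp{(\rightperp{\class{S}})}$ under transfinite degreewise $\tensor$-pure extensions, not of $\rightperp{\class{S}}$ as you first state (right orthogonals are generally not closed under transfinite extensions); fortunately this misstatement is never used, and every later appeal to Eklof's lemma in your argument is to the correct, left-orthogonal form.
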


\begin{proof}
This follows from~\cite[Corollary~2.15]{saorin-stovicek} since $(\cat{G}, \mathcal{P}_{\otimes})$, and hence $\cha{G}_\otimes$, is an efficient exact category, by~\cite[Lemma 3.6]{estrada-gillespie-odabasi}.
\end{proof}

Let us recall how $\otimes$ lifts to a closed symmetric monoidal structure on $\cha{G}$. Given $X, Y \in \cha{G}$, their tensor product $X
\otimes Y$ is defined by $$(X \otimes Y)_n = \bigoplus_{i+j=n} (X_i
\otimes Y_j)$$ in degree $n$. The boundary map $\delta_n$ is induced
by the formula $d^X_i \otimes 1_{Y_j} +
(-1_{X_i})^{i} \otimes d^Y_j$. The closed structure also lifts from $\cat{G}$ to $\cha{G}$, but we won't need its description in this paper. The point is that the above tensor product gives us a  symmetric monoidal structure on $\cha{G}$ with $X \otimes -$ a left adjoint for each $X \in \cha{G}$.  If $I$ denotes the unit of the monoidal structure on $\cat{G}$, then $S^0(I)$, the trivial complex with $I$ concentrated in degree 0,  becomes the unit for the monoidal structure on $\cha{G}$.

The following lemma states that the $\otimes$-pure short exact sequences of chain complexes are precisely the short exact sequences in $\cha{G}_{\otimes}$.

\begin{lemma}\label{lemma-degreewise-purity}
$0 \xrightarrow{} W \xrightarrow{} X \xrightarrow{} Y \xrightarrow{} 0$ is a short exact sequence in the exact category $\cha{G}_\otimes$ if and only if $0 \xrightarrow{} C \otimes W \xrightarrow{}C \otimes X \xrightarrow{} C \otimes Y \xrightarrow{} 0$ remains exact in $\cha{G}$ for every chain complex $C$.
\end{lemma}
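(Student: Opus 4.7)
The plan is to prove both implications directly by reducing exactness of a short exact sequence of complexes to degreewise exactness in $\cat{G}$, then exploiting the two key facts that (i) the $n$-th degree of a tensor product of complexes is a direct sum of tensor products $C_i \otimes (-)_j$, and (ii) arbitrary direct sums preserve exactness in a Grothendieck category. Recall that a short sequence of complexes is short exact in $\cha{G}$ (with its usual abelian structure) if and only if it is degreewise short exact in $\cat{G}$; I will use this freely.

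For the ``only if'' direction, suppose the sequence $0 \to W \to X \to Y \to 0$ is degreewise $\otimes$-pure, i.e.\ for every $A \in \cat{G}$ and every $j \in \Z$, the sequence $0 \to A \otimes W_j \to A \otimes X_j \to A \otimes Y_j \to 0$ is short exact in $\cat{G}$. Fix a chain complex $C \in \cha{G}$ and a degree $n \in \Z$. Then
\[ (C \otimes W)_n = \bigoplus_{i+j=n} C_i \otimes W_j, \]
and similarly for $X$ and $Y$. For each pair $(i,j)$ with $i+j=n$, the sequence $0 \to C_i \otimes W_j \to C_i \otimes X_j \to C_i \otimes Y_j \to 0$ is short exact by $\otimes$-purity applied to the object $C_i$. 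Taking the direct sum over these pairs, and using that direct sums are exact in the Grothendieck category $\cat{G}$, we conclude that the $n$-th degree of $0 \to C \otimes W \to C \otimes X \to C \otimes Y \to 0$ is short exact. Since $n$ was arbitrary, this sequence is short exact in $\cha{G}$.

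For the ``if'' direction, assume that for every chain complex $C$ the sequence $0 \to C \otimes W \to C \otimes X \to C \otimes Y \to 0$ is exact in $\cha{G}$. Given any $A \in \cat{G}$, specialize to $C = S^0(A)$, the trivial complex concentrated in degree $0$. A direct computation from the definition of the tensor product of complexes shows that $S^0(A) \otimes X$ has $A \otimes X_n$ in degree $n$ (with differential $1_A \otimes d^X_n$), and analogously for $W$ and $Y$. The hypothesis then says that $0 \to S^0(A) \otimes W \to S^0(A) \otimes X \to S^0(A) \otimes Y \to 0$ is degreewise exact, which in degree $n$ is precisely $0 \to A \otimes W_n \to A \otimes X_n \to A \otimes Y_n \to 0$. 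Since $A$ and $n$ are arbitrary, the original sequence is degreewise $\otimes$-pure.

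There is essentially no obstacle here, since both directions reduce to elementary bookkeeping. The only point that deserves care is verifying the ``only if'' direction degreewise: namely, making sure that the finite or infinite direct sum $\bigoplus_{i+j=n} C_i \otimes (-)_j$ preserves the degreewise $\otimes$-pure short exact sequence, which follows from the exactness of arbitrary coproducts in any Grothendieck category.
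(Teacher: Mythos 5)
Your proof is correct and follows essentially the same route as the paper's: the forward direction sums the degreewise $\otimes$-pure sequences $0 \to C_i \otimes W_j \to C_i \otimes X_j \to C_i \otimes Y_j \to 0$ over $i+j=n$ using exactness of direct sums in a Grothendieck category, and the converse specializes to $C = S^0(A)$. No gaps.
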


\begin{proof}
($\implies$) Assume
$0 \xrightarrow{} W \xrightarrow{} X \xrightarrow{} Y \xrightarrow{} 0$ is $\otimes$-pure exact in each degree, and let $C \in \cha{G}$. Then for all pairs of integers $i, j$ we have short exact sequences in $\cat{G}$
$$0 \xrightarrow{} C_i \tensor W_j \xrightarrow{}C_i \tensor X_j \xrightarrow{} C_i \tensor Y_j \xrightarrow{} 0.$$
Since short exact sequences in $\cat{G}$ are closed under direct sums, it follows that for each $n \in \Z$ we have short exact sequences
$$0 \xrightarrow{} \bigoplus_{i+j=n}  C_i \tensor W_j \xrightarrow{} \bigoplus_{i+j=n} C_i \tensor X_j \xrightarrow{} \bigoplus_{i+j=n} C_i \tensor Y_j \xrightarrow{} 0.$$
By definition, this is the degree $n$ component of the tensor products, so we get a short exact sequence of chain complexes
$$0 \xrightarrow{} C \tensor W \xrightarrow{} C \tensor X \xrightarrow{} C \tensor Y \xrightarrow{} 0.$$

($\impliedby$) Use that  $S^0(M) \tensor X = M \otimes X$, for any object $M \in \cat{G}$.
\end{proof}

For a subcomplex $X \subseteq Y$ we will write $X \subseteq_{\otimes} Y$ to mean that each inclusion $X_n \subseteq Y_n$ is a $\otimes$-pure monomorphism. In other words, the inclusion $X \subseteq Y$ is an admissible monic in the exact category $\cha{G}_{\otimes}$.

\begin{lemma}\label{lemma-purity-prop}
Let $X  \subseteq Y \subseteq Z$ be subcomplexes in $\cha{G}$.
If $X \subseteq_{\otimes}Z$  and $Y/X \subseteq_{\otimes} Z/X$, then also $Y \subseteq_{\otimes} Z$.
\end{lemma}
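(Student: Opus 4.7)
The plan is to reduce to a purely categorical statement in $\cat{G}$: since $\otimes$-purity in $\cha{G}_{\otimes}$ is defined degreewise, it suffices to show that for each $n\in\Z$, the subobjects $A:=X_n\subseteq B:=Y_n\subseteq C:=Z_n$ in $\cat{G}$, subject to $A\subseteq_{\otimes}C$ and $B/A\subseteq_{\otimes} C/A$, satisfy $B\subseteq_{\otimes} C$.

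To verify this, I would fix an arbitrary test object $M\in\cat{G}$ and apply $M\otimes-$ to the short exact sequences $0\to A\to B\to B/A\to 0$ and $0\to A\to C\to C/A\to 0$. By right exactness of $M\otimes-$ these yield two sequences that are exact in the middle and on the right, sitting in a ladder diagram whose vertical arrows are the identity on $M\otimes A$, an induced map $\alpha\colon M\otimes B\to M\otimes C$, and an induced map $\beta\colon M\otimes (B/A)\to M\otimes (C/A)$. Hypothesis $A\subseteq_{\otimes}C$ makes the bottom row fully short exact, and hypothesis $B/A\subseteq_{\otimes}C/A$ makes $\beta$ a monomorphism.

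A routine diagram chase then shows $\alpha$ is monic. If $\alpha(x)=0$ for some $x\in M\otimes B$, then $\beta$ kills the image $\bar x\in M\otimes (B/A)$, so $\bar x=0$; by exactness of the top row at $M\otimes B$, the element $x$ lifts to some $y\in M\otimes A$. The image of $y$ in $M\otimes C$ equals $\alpha(x)=0$, and since the composite $M\otimes A\to M\otimes B\to M\otimes C$ is the monomorphism supplied by the $\otimes$-purity of $A\subseteq C$, we conclude $y=0$, hence $x=0$.

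I do not foresee any real obstacle here: one does not need to separately check that $A\subseteq_{\otimes} B$, because right exactness of $M\otimes-$ already makes the top row exact in the range needed, and the chase only invokes the $\otimes$-purity of the two outer inclusions. The result is essentially the $\otimes$-analog of the classical fact that if $A$ is pure in $C$ and $B/A$ is pure in $C/A$, then $B$ is pure in $C$.
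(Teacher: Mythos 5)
Your argument is correct and is essentially the paper's proof: both tensor the same ladder diagram (identity on the tensored $X$, induced maps on $Y\to Z$ and $Y/X\to Z/X$), use the hypothesis $X\subseteq_{\otimes}Z$ to keep the bottom row exact and $Y/X\subseteq_{\otimes}Z/X$ to keep the right vertical map monic, and conclude the middle map is monic by a snake-lemma/kernel chase. The only cosmetic differences are that you work one degree at a time with a test object $M\in\cat{G}$ (which suffices, as in the proof of Lemma~\ref{lemma-degreewise-purity}) and spell out the chase by hand instead of citing the snake lemma.
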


\begin{proof}
We have a comutative diagram in $\cha{G}$ with exact rows and with each vertical arrow a monomorphism.
$$\begin{CD}
     0   @>>>    X    @>>>    Y    @>>>     Y/X    @>>>    0 \\
    @.        @|     @VVV      @VVV   @.\\
        0   @>>>     X   @>>>    Z    @>>>    Z/X   @>>>    0 \\
    \end{CD}$$
    We will use the characterization in Lemma~\ref{lemma-degreewise-purity}.
Since $X \subseteq_{\otimes} Z$, applying $W \otimes - $ for any complex $W$ yields another commutative diagram
$$\begin{CD}
     0   @>>>    W \otimes X    @>>>   W \otimes Y    @>>>    W \otimes Y/X    @>>>    0 \\
    @.        @|     @VVV      @VVV   @.\\
        0   @>>>    W \otimes X   @>>>   W \otimes Z    @>>>   W \otimes Z/X   @>>>    0 \\
    \end{CD}$$
with exact rows. Now since $Y/X \subseteq_{\otimes} Z/X$, the rightmost vertical morphism is monic. So by the snake lemma, the middle vertical morphism is monic which means $Y \subseteq_{\otimes}Z$.
\end{proof}

\section{$\otimes$-acyclicity of chain complexes}\label{section-C-acyclic}

Throughout this section we again let $(\cat{G}, \otimes)$ be a closed symmetric monoidal Grothendieck category and we let $\lambda$ denote a regular cardinal for which $\cat{G}$ is locally $\lambda$-presentable. We note that $\cha{G}$ is then locally $\lambda$-presentable too.

Many of the results we would like to prove for K-flat complexes are actually special cases of a more general phenomenon. In particular, K-flat complexes are $\class{C}$-acyclic in the following sense, by taking $\class{C}$ to be the class of all exact complexes.

\begin{definition}\label{def-C-acyclic}
Let $\class{C}$ be any given fixed class of chain complexes in $\cha{G}$. We will say that a chain complex $X$ is \emph{$\class{C}$-acyclic} if $C \tensor X$ is acyclic for all $C \in \class{C}$. We let ${}_\class{C}\class{W}$ denote the class of all $\class{C}$-acyclic complexes $X$.
\end{definition}

In addition to K-flat complexes, the pure acyclic complexes (see Section~\ref{sec-k-flat-pure-ac}) are also a special case. Note too that the $\{S^0(I)\}$-acyclic complexes are precisely the usual acyclic (exact)  complexes.

\begin{lemma}\label{lemma-lambda-pure-subs}
 Let $P\subseteq X$ be a $\lambda$-pure subcomplex of $X \in \cha{G}$. If $X$ is acyclic (just exact not necessarily pure acyclic), then both $P$ and $X/P$ are acyclic too. In fact, if $X$ is $\class{C}$-acyclic, then $P$ and $X/P$ are also $\class{C}$-acyclic.
\end{lemma}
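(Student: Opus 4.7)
The plan is to invoke the standard characterization (see e.g.\ \cite[Prop.~2.30]{AR}) that, in the locally $\lambda$-presentable category $\cha{G}$, the $\lambda$-pure inclusion $P \hookrightarrow X$ arises as a $\lambda$-filtered colimit, in the arrow category, of split monomorphisms $P_\alpha \hookrightarrow X_\alpha$ of chain complexes. Setting $Q_\alpha := X_\alpha/P_\alpha$, each stage is a split short exact sequence
$$0 \to P_\alpha \to X_\alpha \to Q_\alpha \to 0,$$
and by exactness of $\lambda$-filtered colimits in the Grothendieck category $\cha{G}$ we have $P = \colim_\alpha P_\alpha$, $X = \colim_\alpha X_\alpha$, and $X/P = \colim_\alpha Q_\alpha$.

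Next, I would fix any $C \in \class{C}$. Because $C \otimes -$ is additive and a left adjoint, it preserves split short exact sequences and $\lambda$-filtered colimits. Tensoring with $C$ therefore yields a split SES
$$0 \to C \otimes P_\alpha \to C \otimes X_\alpha \to C \otimes Q_\alpha \to 0$$
at each stage. The associated long exact sequence in homology has vanishing connecting maps, producing a natural-in-$\alpha$ short exact sequence
$$0 \to H_n(C \otimes P_\alpha) \to H_n(C \otimes X_\alpha) \to H_n(C \otimes Q_\alpha) \to 0.$$
Since $H_n$ commutes with filtered colimits in $\cat{G}$, passing to the colimit yields an exact sequence
$$0 \to H_n(C \otimes P) \to H_n(C \otimes X) \to H_n(C \otimes (X/P)) \to 0.$$

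The hypothesis that $X$ is $\class{C}$-acyclic gives $H_n(C \otimes X) = 0$, so both outer terms vanish, i.e.\ $C \otimes P$ and $C \otimes (X/P)$ are acyclic. As this holds for every $C \in \class{C}$, the complexes $P$ and $X/P$ are $\class{C}$-acyclic. The ordinary acyclicity statement is the special case $\class{C} = \{S^0(I)\}$, since $S^0(I)$ is the tensor unit.

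I expect no serious obstacle. One subtle point worth noting is that the direct sum decompositions $X_\alpha \cong P_\alpha \oplus Q_\alpha$ need not be natural in $\alpha$, so one cannot directly split the colimit short exact sequence; however, it is enough to use that the \emph{connecting map} at each stage vanishes—this is a property only of the stage's splitting, and the resulting natural short exact sequence on homology then passes through the colimit cleanly. Beyond that, everything reduces to routine facts: the characterization of $\lambda$-pure monos, exactness of filtered colimits in a Grothendieck category, and the interaction of the tensor product with colimits.
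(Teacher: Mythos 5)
Your proof is correct, and it reaches the conclusion by a genuinely different route than the paper. The paper first proves the plain acyclicity statement by a separate argument: since $S^n(L)$ and $D^n(L)$ are $\lambda$-presentable whenever $L$ is, and $\Hom(S^n(L),-)$, $\Hom(D^n(L),-)$ represent the cycle functor $Z_n$ and the degree-$n$ component, the rows $0 \to Z_nP \to Z_nX \to Z_n(X/P) \to 0$ are ($\lambda$-pure, hence ordinary) short exact sequences; two applications of the snake lemma then yield $0 \to H_nP \to H_nX \to H_n(X/P) \to 0$. Only for the $\class{C}$-acyclic refinement does the paper invoke the colimit-of-split-sequences characterization of $\lambda$-purity, using it to show that $C\otimes P \subseteq C\otimes X$ is again a $\lambda$-pure subcomplex and then feeding that back into the first part. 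You instead run the colimit characterization from the outset, compute homology stage-wise (where the chain-level splitting kills the connecting maps), and commute $H_n$ with the $\lambda$-filtered colimit; the plain acyclicity statement then drops out as the special case $\class{C}=\{S^0(I)\}$. Your version is more unified and avoids the representability/snake-lemma step entirely, at the modest cost of needing that $H_n$ commutes with filtered colimits (harmless in a Grothendieck category) and the care you correctly flag about the splittings not being natural in $\alpha$ --- only the vanishing of the connecting maps at each stage is needed, and that passes to the colimit. The paper's first argument has the minor side benefit of showing that the cycle sequences are themselves $\lambda$-pure exact, but that extra information is not used elsewhere, so nothing is lost by your approach.
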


 \begin{proof}
We generalize ideas inside the proof of~\cite[Prop.~3.11]{estrada-gillespie-odabasi}. In particular, if $L$ is any $\lambda$-presented object of $\cat{G}$, then $S^n(L)$ and $D^n(L)$ are $\lambda$-presented in $\cha{G}$. So by applying $\Hom_{\cha{G}}(S^n(L),-) \cong \Hom_{\cat{G}}(L,Z_n(-))$ and $\Hom_{\cha{G}}(D^n(L),-) \cong \Hom_{\cat{G}}(L,(-)_n)$ we find that the rows of the commutative diagram must be $\lambda$-pure exact sequences:
$$\begin{CD}
     0   @>>>    Z_nP    @>>>   Z_nX    @>>>    Z_n(X/P)    @>>>    0 \\
    @.        @VVV     @VVV      @VVV   @.\\
        0   @>>>    P_n   @>>>   X_n    @>>>   (X/P)_n    @>>>    0 \\
    \end{CD}$$
 In particular they are usual short exact sequences and so applying the snake lemma we deduce that $0 \xrightarrow{} B_{n-1}P \xrightarrow{} B_{n-1}X_n \xrightarrow{} B_{n-1}(X/P) \xrightarrow{} 0$ is a short exact sequence.
But then applying the snake lemma again to
 $$\begin{CD}
     0   @>>>    B_nP    @>>>   B_nX    @>>>    B_n(X/P)    @>>>    0 \\
    @.        @VVV     @VVV      @VVV   @.\\
     0   @>>>    Z_nP    @>>>   Z_nX    @>>>    Z_n(X/P)    @>>>    0 \\
    \end{CD}$$
    produces a short exact sequence in homology  $0 \xrightarrow{} H_nP \xrightarrow{} H_nX \xrightarrow{} H_n(X/P) \xrightarrow{} 0$. We conclude that $P$ and $X/P$ are each acyclic whenever $X$ is acylic.

We now show that if $X$ is $\class{C}$-acyclic, then $P$ and $X/P$ are also $\class{C}$-acyclic. Since $0 \xrightarrow{} P \xrightarrow{} X \xrightarrow{} X/P \xrightarrow{} 0$ is a $\lambda$-pure exact sequence it is characterized as a $\lambda$-directed colimit of splitting short exact sequences; see~\cite[2.30, page 86]{AR} and~\cite[Prop.~6.5]{gillespie-G-derived}. For any complex $C$ we have that $C \otimes -$ preserves direct limits, and since direct limits are exact it follows that $0 \xrightarrow{} C \otimes P \xrightarrow{} C \otimes X \xrightarrow{} C \otimes  X/P \xrightarrow{} 0$ is also a $\lambda$-directed colimit of splitting short exact sequences. Therefore it too is a $\lambda$-pure exact sequence. So whenever $C \otimes X$ is acyclic, it follows from  what we already proved above that both $C \otimes  P$ and $C \otimes  X/P$ are also acyclic. In particular, for any class of complexes $\class{C}$, both $P$ and $X/P$ are $\class{C}$-acyclic whenever $X$ is $\class{C}$-acyclic.
 \end{proof}

\begin{proposition}\label{prop-C-acyclic-properties}
Let $\class{C}$ be any class of chain complexes. The class ${}_\class{C}\class{W}$ of all $\class{C}$-acyclic complexes satisfies the following properties.
\begin{enumerate}
\item ${}_\class{C}\class{W}$ is closed under direct sums, direct summands (retracts), and direct limits. Also, ${}_\class{C}\class{W}$ contains all contractible complexes.
\item ${}_\class{C}\class{W}$ is thick in $\cha{G}_{\otimes}$. That is, it satisfies the 2 out of 3 property with respect to short exact sequences of chain complexes that are $\otimes$-pure exact in each degree.
\item ${}_\class{C}\class{W}$ is closed under transfinite extensions in $\cha{G}_{\otimes}$. That is, if $X$ has a filtration $X \cong \varinjlim_{\alpha < \beta} X_{\alpha}$ where each $X_{\alpha} \xrightarrow{} X_{\alpha +1}$ is a degreewise $\otimes$-pure monomorphism, and $X_0$ and each $X_{\alpha +1}/X_{\alpha}$ are $\class{C}$-acyclic, then $X$ too is $\class{C}$-acyclic.
\end{enumerate}
\end{proposition}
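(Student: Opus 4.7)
The plan is to exploit that $C \otimes -$ is a left adjoint (from the closed monoidal structure), hence preserves all colimits, together with Lemma~\ref{lemma-degreewise-purity}, which converts degreewise $\otimes$-pure short exact sequences of complexes into honest short exact sequences of complexes after tensoring with any $C$. Acyclicity is then transported along standard homological tools: additivity of homology on direct sums and summands, exactness of direct limits in the Grothendieck category $\cha{G}$, and the long exact sequence in homology.

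For (1), I would handle each closure property individually. Direct sums: since $C \otimes -$ commutes with $\bigoplus$ and $H_n$ commutes with direct sums in $\cha{G}$, $C \otimes \bigoplus X_i \cong \bigoplus C \otimes X_i$ is acyclic whenever each $C \otimes X_i$ is. Direct summands follow by additivity of $H_n$. Direct limits use the same pattern, with exactness of $\varinjlim$ in the Grothendieck category $\cha{G}$ in place of additivity. Contractibility: if $s$ is a contracting homotopy for $X$, then $1_C \otimes s$ is a contracting homotopy for $C \otimes X$, so $C \otimes X$ is acyclic for every $C$.

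For (2), given a sequence $0 \to W \to X \to Y \to 0$ that is $\otimes$-pure exact in each degree, Lemma~\ref{lemma-degreewise-purity} provides, for each $C$, a genuine short exact sequence of complexes
\[0 \xrightarrow{} C \otimes W \xrightarrow{} C \otimes X \xrightarrow{} C \otimes Y \xrightarrow{} 0.\]
The induced long exact sequence in homology immediately yields the $2$-out-of-$3$ property for $\class{C}$-acyclicity.

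For (3), I would run a transfinite induction along the filtration $\{X_\alpha\}_{\alpha < \beta}$ to establish that each $X_\alpha$ is $\class{C}$-acyclic, from which the conclusion for $X = \varinjlim X_\alpha$ follows by the direct limit case of (1). Successor ordinals are handled by applying (2) to the degreewise $\otimes$-pure exact sequence $0 \to X_\alpha \to X_{\alpha+1} \to X_{\alpha+1}/X_\alpha \to 0$, whose ends are $\class{C}$-acyclic by hypothesis and induction. Limit ordinals $\lambda$ are handled by writing $C \otimes X_\lambda = \varinjlim_{\alpha < \lambda}(C \otimes X_\alpha)$ and invoking exactness of direct limits in $\cha{G}$. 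There is no real obstacle here; all conceptual content already resides in Lemma~\ref{lemma-degreewise-purity} and the Grothendieck category axioms, and the only care needed is the routine bookkeeping at the transfinite stage.
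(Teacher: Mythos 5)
Your proposal is correct and follows essentially the same route as the paper: reduce everything to the fact that $C\otimes-$ preserves colimits, use Lemma~\ref{lemma-degreewise-purity} plus the long exact homology sequence for the 2-out-of-3 property, and combine extension-closure with direct-limit-closure for the transfinite step. The only cosmetic difference is in handling contractible complexes, where the paper decomposes $X\cong\bigoplus_{n}D^n(M_n)$ while you tensor the contracting homotopy (which needs the usual Koszul sign, but is equally valid).
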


\begin{proof}
Since we are in a Grothendieck category the class of all acyclic complexes is closed under direct sums, direct summands, and direct limits. For any chain complex $C$, the functor $C \otimes -$ preserves  direct sums, direct summands, and direct limits. So ${}_\class{C}\class{W}$ is closed under these operations. As for the contractible complexes being $\class{C}$-acyclic, recall that any contractible complex $X$ must take the form $X \cong \bigoplus_{n\in\Z}D^n(M_n)$ for some $\cat{G}$-objects $\{M_n\}_{n\in\Z}$. So then we have  $C \otimes X \cong \bigoplus_{n\in\Z} (C\otimes D^n(M_n))$, and each $C\otimes D^n(M_n)$ can be shown to be acyclic. For example, see~\cite[Exercise~1.2.5]{weibel}.

Now let $C \in \class{C}$. By Lemma~\ref{lemma-degreewise-purity} we get a short exact sequence of chain complexes
$$0 \xrightarrow{} C \tensor W \xrightarrow{} C \tensor X \xrightarrow{} C \tensor Y \xrightarrow{} 0.$$
So then if 2 out of 3 of these are acyclic, so is the third. Therefore ${}_\class{C}\class{W}$ satisfies the 2 out of 3 property on short exact sequences in $\cha{G}_{\otimes}$.

So now it is easy to see that ${}_\class{C}\class{W}$ is closed under transfinite extensions in $\cha{G}_{\otimes}$. This follows from the fact that we have shown ${}_\class{C}\class{W}$ to be closed under extensions in $\cha{G}_{\otimes}$, and under direct limits.
\end{proof}

\begin{theorem}\label{C-acyclics-cotorsion-pair}
Let $\class{C}$ be any class of $\cat{G}$-chain complexes and ${}_\class{C}\class{W}$ the class of all $\class{C}$-acyclic complexes. Set $\class{F} := \rightperp{{}_\class{C}\class{W}}$, defined in the exact category $\cha{G}_{\otimes}$. That is,
 $$\class{F} = \{\,F \in \cha{G} \,|\, \Ext^1_{\otimes}(X,F)=0\  \forall \,X \in \,{}_\class{C}\class{W}\}.$$
Then $({}_\class{C}\class{W}, \class{F})$ is a complete cotorsion pair, cogenerated by a set, in the exact category $\cha{G}_{\otimes}$. It satisfies the following properties:
\begin{enumerate}
\item ${}_\class{C}\class{W}$ is thick in $\cha{G}_{\otimes}$. In particular the cotorsion pair is hereditary.
\item $F \in \class{F}$ if and only if each $F_n$ is $\otimes$-pure injective and every chain map $X \xrightarrow{} F$ is null homotopic whenever $X \in {}_\class{C}\class{W}$.
\item ${}_\class{C}\class{W}\cap \class{F}$ equals the class of all injective objects in the exact category $\cha{G}_{\otimes}$. These are precisely the contractible complexes with $\otimes$-pure-injective components.
\end{enumerate}
\end{theorem}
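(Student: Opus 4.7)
The plan is to apply Lemma~\ref{cogen-by-set-complete} with $\class{S}$ the set of isomorphism representatives of all $\lambda$-presented $\class{C}$-acyclic chain complexes. Each disk $D^n(L_i)$ is contractible and $\lambda$-presented, so it lies in $\class{S}$ by Proposition~\ref{prop-C-acyclic-properties}(1), and the hypothesis of Lemma~\ref{cogen-by-set-complete} is automatic. It then suffices to identify $\leftperp{(\rightperp{\class{S}})}$ with ${}_\class{C}\class{W}$. The inclusion $\leftperp{(\rightperp{\class{S}})}\subseteq {}_\class{C}\class{W}$ is immediate from Proposition~\ref{prop-C-acyclic-properties}, since ${}_\class{C}\class{W}$ contains $\class{S}$ and is closed under retracts and transfinite degreewise $\otimes$-pure extensions. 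For the reverse inclusion, given $X \in {}_\class{C}\class{W}$ I would construct a continuous chain $0 = X_0 \subseteq X_1 \subseteq \cdots$ of $\lambda$-pure subcomplexes with $X = \bigcup X_\alpha$ and each $X_{\alpha+1}/X_\alpha$ being $\lambda$-presented; this is a standard consequence of local $\lambda$-presentability of $\cha{G}$ of Hill-lemma type. Lemma~\ref{lemma-lambda-pure-subs} then guarantees that each $X_\alpha$ and each successive factor is $\class{C}$-acyclic, hence lies in $\class{S}$, while Lemma~\ref{lemma-purity-prop} upgrades the consecutive inclusions $X_\alpha \subseteq X_{\alpha+1}$ to $\otimes$-pure monomorphisms, exhibiting $X$ as a transfinite $\otimes$-pure extension of complexes in $\class{S}$.

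Claim (1) is then immediate: ${}_\class{C}\class{W}$ is thick in $\cha{G}_\otimes$ by Proposition~\ref{prop-C-acyclic-properties}(2), and thickness of the left-hand class of a complete cotorsion pair is equivalent to the pair being hereditary. For (2), suppose first that $F \in \class{F}$. Since every disk $D^n(M)$ is contractible, it lies in ${}_\class{C}\class{W}$, so $\Ext^1_\otimes(D^n(M),F) = 0$; combined with the adjunction between disks and degree evaluation, this shows each $F_n$ is $\otimes$-pure injective in $\cat{G}$. Given a chain map $f\colon X \to F$ with $X \in {}_\class{C}\class{W}$, its mapping cone fits into a degreewise split (hence $\otimes$-pure) short exact sequence $0 \to F \to \mathrm{Cone}(f) \to \Sigma X \to 0$. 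Closure of ${}_\class{C}\class{W}$ under suspension (built from disks and the thickness property) places $\Sigma X$ in ${}_\class{C}\class{W}$, so $\Ext^1_\otimes(\Sigma X, F) = 0$ forces the cone sequence to split, which is equivalent to $f$ being null-homotopic. Conversely, if $F$ satisfies the two stated properties, then any $\otimes$-pure extension $0 \to F \to Y \to X \to 0$ with $X \in {}_\class{C}\class{W}$ is split in each degree by $\otimes$-pure injectivity of the $F_n$; the obstruction to a splitting of complexes is a chain map $X \to \Sigma F$, which is null-homotopic because $\Sigma F$ inherits both properties, so the extension splits.

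For (3) the plan is to show that each description coincides with the class of injective objects of $\cha{G}_\otimes$. If $F \in {}_\class{C}\class{W}\cap \class{F}$ then by (2) the identity $\mathrm{id}_F$ is null-homotopic, so $F$ is contractible, and (2) also gives that the components are $\otimes$-pure injective. Conversely, a complex of the form $\bigoplus_n D^n(M_n)$ with each $M_n$ a $\otimes$-pure injective object is seen to be injective in $\cha{G}_\otimes$ directly from the disk--evaluation adjunction, while any injective $I$ embeds into a contractible complex via a degreewise split monic and hence must itself be a summand of that contractible complex, giving contractibility; applying the disk argument within (2) to $I$ also forces its components to be $\otimes$-pure injective. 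I expect the main obstacle to be the filtration argument in the reverse inclusion of the first step: turning a $\lambda$-directed union of $\lambda$-presented subcomplexes into a well-ordered continuous chain with $\otimes$-pure successor inclusions requires a careful application of local $\lambda$-presentability combined with Lemma~\ref{lemma-purity-prop} to propagate $\otimes$-purity across stages, with Lemma~\ref{lemma-lambda-pure-subs} ensuring that $\class{C}$-acyclicity is preserved by the subcomplexes produced.
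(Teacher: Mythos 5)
Your overall strategy is the paper's: cogenerate by a set of presentable $\class{C}$-acyclic complexes via Lemma~\ref{cogen-by-set-complete}, identify $\leftperp{(\rightperp{\class{S}})}$ with ${}_\class{C}\class{W}$ by a filtration argument resting on Lemmas~\ref{lemma-lambda-pure-subs} and~\ref{lemma-purity-prop}, and then deduce (1)--(3) by the standard disk/suspension manipulations. Parts (1), (2) and (3) of your write-up are fine and match the paper (the paper is terser on the null-homotopy equivalence in (2), but your cone argument is the intended one).

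The gap is in the step you yourself flag as the main obstacle, and it is not merely technical. You take $\class{S}$ to be the $\lambda$-\emph{presented} $\class{C}$-acyclic complexes and assert that every complex is a continuous union of $\lambda$-pure, $\lambda$-presented subcomplexes ``by local $\lambda$-presentability, of Hill-lemma type.'' This is false in general: local $\lambda$-presentability gives that every object is a $\lambda$-directed colimit of $\lambda$-presentable objects, but those need not be subobjects and the colimit maps need not be $\lambda$-pure monomorphisms. Already for $R$-modules with $\lambda=\aleph_0$, the claim would say every module is a directed union of finitely presented pure submodules, which fails (e.g.\ the Pr\"ufer group $\mathbb{Z}_{p^\infty}$ has no nonzero finite pure subgroups). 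The correct tool is \cite[Theorem~2.33]{AR}: there exist arbitrarily large regular cardinals $\gamma\rhd\lambda$ such that every $\gamma$-presentable subcomplex of $X$ is contained in a $\gamma$-presentable $\lambda$-pure subcomplex of $X$. One must fix such a $\gamma$ and take $\class{S}$ to be a set of isomorphism representatives of the $\gamma$-presentable $\class{C}$-acyclic complexes (the disks $D^n(L_i)$ are still in $\class{S}$ since $\gamma>\lambda$). With that replacement your filtration goes through exactly as you describe: successor quotients are chosen as $\lambda$-pure $\gamma$-presentable subcomplexes of $X/P_\alpha$, Lemma~\ref{lemma-lambda-pure-subs} and Proposition~\ref{prop-C-acyclic-properties}(2) propagate $\class{C}$-acyclicity, and Lemma~\ref{lemma-purity-prop} upgrades $P_{\alpha+1}\subseteq X$ to a degreewise $\otimes$-pure inclusion. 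Note also that the paper only tracks degreewise $\otimes$-purity of $P_\alpha\subseteq X$ at successor stages (via Lemma~\ref{lemma-purity-prop}), rather than insisting each $P_\alpha$ remain $\lambda$-pure in $X$ as your chain does; if you keep your formulation you would additionally need the cancellation/composition properties of $\lambda$-pure monomorphisms, which the paper's arrangement avoids.
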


 \begin{proof}
Since $\cha{G}$ is locally $\lambda$-presentable there exists, up to isomorphism, only a set (as opposed to a proper class) of $\gamma$-presentable chain complexes for each regular cardinal $\gamma$ (\cite[Corollary~1.69]{AR}). Moreover, by~\cite[Theorem~2.33]{AR} there exist arbitrarily large regular cardinals $\gamma \rhd \lambda$ such that  every $\gamma$-presentable subcomplex $S \subseteq X$ in $\cha{G}$ is contained in a $\lambda$-pure subcomplex $P \subseteq X$, where $P$ is $\gamma$-presentable. So we choose such a $\gamma$ and let $\class{S}$  be a set of isomorphism representatives for all $\gamma$-presentable $\class{C}$-acyclic chain complexes. Lemma~\ref{cogen-by-set-complete} applies because $\{D^n(L_i)\} \subseteq \class{S}$. (Each $D^n(L_i)$ is $\class{C}$-acylic and $\gamma$-presentable, since $\gamma > \lambda$.) So we obtain a complete cotorsion pair $(\leftperp{(\rightperp{\class{S}})}, \rightperp{\class{S}})$ in the exact category $\cha{G}_\otimes$. Moreover, the class $\leftperp{(\rightperp{\class{S}})}$ consists precisely of direct summands (retracts) of transfinite degreewise $\tensor$-pure extensions of complexes in $\class{S} = \class{S}\cup \{D^n(L_i)\}$.

We will show  $\leftperp{(\rightperp{\class{S}})} = {}_\class{C}\class{W}$. Since $\class{S}\subseteq {}_\class{C}\class{W}$, the containment $\leftperp{(\rightperp{\class{S}})} \subseteq {}_\class{C}\class{W}$ follows from Proposition~\ref{prop-C-acyclic-properties}.  On the other hand, we will show ${}_\class{C}\class{W} \subseteq \leftperp{(\rightperp{\class{S}})}$ by showing that every $X \in {}_\class{C}\class{W}$ is a transfinite degreewise $\tensor$-pure extension of complexes in $\class{S}$.

But given $X\in
{}_\class{C}\class{W}$, we may use transfinite induction to construct a filtration of $X$ by $\class{C}$-acyclic subcomplexes $P_{\alpha}\subseteq_{\otimes} X$ with $P_{0}\in\class{S}$ and each $P_{\alpha+1}/P_{\alpha} \in \class{S}$. (Recall that $\subseteq_{\otimes}$ denotes a degreewise $\otimes$-pure subcomplex.)  For $i=0$,
we let $P_{0}$ be a nonzero $\gamma$-presentable and $\lambda$-pure subcomplex of $X$.  Being a $\lambda$-pure subcomplex of the $\class{C}$-acyclic $X$, we note that $P_0$ must also be $\class{C}$-acyclic, by Lemma~\ref{lemma-lambda-pure-subs}. Moreover, $P_0 \subseteq_{\otimes} X$, since $\lambda$-pure subcomplexes are degreewise $\lambda$-pure, hence degreewise $\otimes$-pure.
Having defined a $\class{C}$-acyclic and degreewise $\otimes$-pure subcomplex $P_{\alpha}\subseteq_{\otimes} X$, and assuming that $P_{\alpha}\neq
X$, we let $P_{\alpha+1}/P_{\alpha}\subseteq_{\otimes} X/P_{\alpha}$ be a nonzero $\gamma$-presentable and $\lambda$-pure subcomplex of $X/P_{\alpha}$. Since $P_\alpha$ and $X$ are both $\class{C}$-acyclic, Proposition~\ref{prop-C-acyclic-properties}(2) assures us that $X/P_{\alpha}$ is also in ${}_\class{C}\class{W}$. So then $P_{\alpha+1}/P_{\alpha}$ is $\class{C}$-acyclic as well by Lemma~\ref{lemma-lambda-pure-subs}.  Since both $P_{\alpha+1}/P_\alpha \subseteq_{\otimes} X/P_\alpha$ and $P_\alpha \subseteq_{\otimes} X$, we infer from Lemma~\ref{lemma-purity-prop} that $P_{\alpha+1}\subseteq_{\otimes} X$ too. We want to show that $P_{\alpha+1}$ is $\class{C}$-acyclic too. For this we first note  $(X/P_\alpha)/(P_{\alpha+1}/P_\alpha) \cong X/P_{\alpha+1}$ must be in ${}_\class{C}\class{W}$, by Lemma~\ref{lemma-lambda-pure-subs} (or even by Proposition~\ref{prop-C-acyclic-properties}(2)).  It then follows from Proposition~\ref{prop-C-acyclic-properties}(2) that $P_{\alpha+1}$ is $\class{C}$-acyclic. For the limit ordinal
step, we define $P_{\beta}= \bigcup_{\alpha <\beta}P_{\alpha}$; this
is a colimit of degreewise $\otimes$-pure subcomplexes of $X$, so is also a degreewise $\otimes$-pure subcomplex of
$X$. Of course $P_{\beta}$ must also be $\class{C}$-acyclic by Proposition~\ref{prop-C-acyclic-properties}(1). This process will eventually stop when $P_{\alpha}=X$, at which point
we have written $X$ as a transfinite extension in $\cha{G}_{\otimes}$ of complexes in $\class{S}$. So ${}_\class{C}\class{W} \subseteq \leftperp{(\rightperp{\class{S}})}$.

This proves $({}_\class{C}\class{W}, \class{F}) = (\leftperp{(\rightperp{\class{S}})}, \rightperp{\class{S}})$ is a complete cotorsion pair in $\cha{G}_{\otimes}$. We showed in Proposition~\ref{prop-C-acyclic-properties} that ${}_\class{C}\class{W}$ is thick in $\cha{G}_{\otimes}$.

For (2). Since $D^n(M)$ is $\class{C}$-acyclic for any $\cat{G}$-object $M$, the isomorphism $$\Ext^1_{\otimes}(D^n(M),Y) \cong \Ext^1_{\mathcal{P}_{\otimes}}(M,Y_n)$$ implies that each $Y_n$ is $\otimes$-pure injective whenever $F \in \class{F}$. From this we deduce $F \in \class{F}$ if and only if each $F_n$ is $\otimes$-pure injective and every chain map $X \xrightarrow{} F$ is null homotopic whenever $X \in {}_\class{C}\class{W}$.

For (3), it follows from (2) that $X \in  {}_\class{C}\class{W}\cap\class{F}$  if and only if $X$ has $\otimes$-pure injective components and $X \xrightarrow{1_X} X$ is null homotopic. As in~\cite[Section 3.2]{estrada-gillespie-odabasi}, this class coincides with the class $\tilclass{PI}_{\otimes}$ of injective objects relative to $\cha{G}_{\otimes}$.
\end{proof}

In the language of abelian model structures we have shown the following.

\begin{corollary}\label{corollary-C-acyclic-models}
Let $(\cat{G}, \otimes)$ be any closed symmetric monoidal Grothendieck category.
Let $\class{C}$ be any class of $\cat{G}$-chain complexes and ${}_\class{C}\class{W}$ the class of all $\class{C}$-acyclic complexes.
Then $(All, {}_\class{C}\class{W}, \class{F})$ is a Hovey triple with respect to the exact structure $\cha{G}_{\otimes}$. The corresponding  model structure is cofibrantly generated.
\end{corollary}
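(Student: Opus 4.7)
The strategy is to verify the three axioms of a Hovey triple for $(All,\,{}_\class{C}\class{W},\,\class{F})$ relative to the exact structure $\cha{G}_\otimes$, and then to appeal to the correspondence between such triples and cofibrantly generated model structures in efficient exact categories from~\cite{saorin-stovicek}. Two of the required ingredients are already supplied by Theorem~\ref{C-acyclics-cotorsion-pair}: the thickness of $\,{}_\class{C}\class{W}$ in $\cha{G}_\otimes$, and the fact that $({}_\class{C}\class{W},\class{F})$ is a complete cotorsion pair cogenerated by a set. Only the second cotorsion pair $(All,\,{}_\class{C}\class{W}\cap\class{F})$ remains to be checked.

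By Theorem~\ref{C-acyclics-cotorsion-pair}(3), the intersection ${}_\class{C}\class{W}\cap\class{F}$ coincides with the class of injective objects of $\cha{G}_\otimes$. The perpendicularity containments $All=\leftperp{(\textup{Inj})}$ and $\textup{Inj}=\rightperp{All}$ are immediate, so the entire content reduces to showing that $\cha{G}_\otimes$ has enough injectives. For this, given any $X\in\cha{G}$, the plan is to first use that $(\cat{G},\mathcal{P}_\otimes)$ has enough $\otimes$-pure injectives in order to choose, for each $n$, a $\otimes$-pure monomorphism $X_n\hookrightarrow E_n$ with $E_n$ a $\otimes$-pure injective object of $\cat{G}$; then form the contractible complex $J$ with $J_n=E_n\oplus E_{n-1}$ and differential $(a,b)\mapsto(b,0)$. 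The assignment $x\mapsto(x,d^X x)$ defines a chain map $X\to J$ which in each degree factors as the split monic $X_n\hookrightarrow X_n\oplus X_{n-1}$ followed by the componentwise $\otimes$-pure monic $X_n\oplus X_{n-1}\hookrightarrow E_n\oplus E_{n-1}$. This is therefore an admissible monic in $\cha{G}_\otimes$, and $J$, being contractible with $\otimes$-pure injective components, lies in $\,{}_\class{C}\class{W}\cap\class{F}$. Hence the cotorsion pair $(All,\,{}_\class{C}\class{W}\cap\class{F})$ is complete.

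With the three axioms in place, cofibrant generation follows from the general framework of~\cite{saorin-stovicek} applied to the efficient exact category $\cha{G}_\otimes$: the generating trivial cofibrations arise from the cogenerating set $\class{S}$ of Theorem~\ref{C-acyclics-cotorsion-pair}, while the generating cofibrations may be taken to come from admissible embeddings of the generators $\{D^n(L_i)\}$ of $\cha{G}_\otimes$ into their injective envelopes furnished by the construction above. The main technical obstacle is the production of enough injectives in $\cha{G}_\otimes$ just described; all other steps are either already in place or routine applications of the theory of cotorsion pairs in efficient exact categories.
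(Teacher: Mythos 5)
Your proof is correct and follows essentially the same route the paper intends: the Corollary is stated without proof because Theorem~\ref{C-acyclics-cotorsion-pair} already supplies the thickness of ${}_\class{C}\class{W}$, the complete cotorsion pair $({}_\class{C}\class{W},\class{F})$ cogenerated by a set, and the identification of ${}_\class{C}\class{W}\cap\class{F}$ with the injectives of $\cha{G}_{\otimes}$, while the completeness of $(All,\,{}_\class{C}\class{W}\cap\class{F})$ is the injective cotorsion pair $(\cha{G},\tilclass{PI}_{\otimes})$ known from \cite[Prop.~3.11]{estrada-gillespie-odabasi} (and revisited in Lemma~\ref{lemma-injective-cot-pair}); your explicit disk-complex embedding $X\to J$ is a correct direct verification of that last ingredient. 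The only imprecision is in your description of the generating cofibrations, which should come from a cogenerating set for the cotorsion pair $(All,\,{}_\class{C}\class{W}\cap\class{F})$ (such as the set $\class{X}$ of Lemma~\ref{lemma-injective-cot-pair}) rather than from embeddings of the $D^n(L_i)$ alone, but this does not affect the validity of the argument.
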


The existence of $\class{C}$-acyclic covers is also rather immediate.

\begin{corollary}\label{corollary-C-acyclic-covers}
Let $(\cat{G}, \otimes)$ be any closed symmetric monoidal Grothendieck category. Then any chain complex $X \in \cha{G}$ has a $\class{C}$-acyclic cover.
\end{corollary}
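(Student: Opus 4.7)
The plan is to combine the complete cotorsion pair from Theorem~\ref{C-acyclics-cotorsion-pair} with the direct-limit closure of ${}_\class{C}\class{W}$ from Proposition~\ref{prop-C-acyclic-properties}(1), and then invoke the classical result that in a Grothendieck category any precovering class closed under direct limits is automatically covering.

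First, I would extract a ${}_\class{C}\class{W}$-precover in the abelian category $\cha{G}$, not merely in $\cha{G}_{\otimes}$. For a given chain complex $Y$, completeness of the cotorsion pair $({}_\class{C}\class{W},\class{F})$ supplies a degreewise $\otimes$-pure short exact sequence $0 \to F \to W \to Y \to 0$ with $W \in {}_\class{C}\class{W}$ and $F \in \class{F}$; in particular this is a genuine short exact sequence in $\cha{G}$, so $W \to Y$ is surjective. To verify the lifting property, I take any chain map $W' \to Y$ with $W' \in {}_\class{C}\class{W}$ and pull back to form $0 \to F \to P \to W' \to 0$ in $\cha{G}$. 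This pullback sequence remains degreewise $\otimes$-pure: in each degree $n$, for any $C \in \cat{G}$ the functor $C \otimes -$ is a left adjoint and so preserves pullbacks, and pullbacks of epimorphisms stay epimorphisms in the abelian category $\cat{G}$. Since $F \in \class{F} = \rightperp{{}_\class{C}\class{W}}$, the vanishing $\Ext^1_{\otimes}(W',F)=0$ forces this pullback to split in $\cha{G}$, and composing the splitting with the projection $P \to W$ yields the required lift $W' \to W$.

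Second, Proposition~\ref{prop-C-acyclic-properties}(1) ensures that ${}_\class{C}\class{W}$ is closed under direct limits, and $\cha{G}$ is itself a Grothendieck category. I would then apply the classical theorem (due to Enochs in the module case and known more generally for Grothendieck categories) that a precovering class closed under direct limits is covering. This delivers a ${}_\class{C}\class{W}$-cover of each $Y$.

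The only real subtlety lies in bridging the exact structure $\cha{G}_{\otimes}$, where completeness of the cotorsion pair was established, with the abelian category $\cha{G}$, where covers must be produced. This is handled by the pullback-stability of degreewise $\otimes$-pure short exact sequences together with the interpretation of $\Ext^1_{\otimes}$ as Yoneda Ext in $\cha{G}_{\otimes}$; once the precover property is promoted from the exact structure to the ambient abelian category, the existence of covers is a direct application of a standard theorem, matching the authors' remark that the result is ``rather immediate''.
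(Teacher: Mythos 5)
Your argument follows the same route as the paper: the complete cotorsion pair from Theorem~\ref{C-acyclics-cotorsion-pair} yields special ${}_\class{C}\class{W}$-precovers, and closure under direct limits (Proposition~\ref{prop-C-acyclic-properties}(1)) upgrades precovering to covering via the standard Enochs-type result (the paper cites \cite[Prop.~3.9]{estrada-guil-odabasi-phantom}); your extra pullback step merely makes explicit the routine fact that a special precover relative to $\cha{G}_{\otimes}$ is a precover in $\cha{G}$. One small slip: $C\otimes-$ is a left adjoint and hence preserves colimits, not pullbacks, so that justification is wrong; but the fact you need --- that the pullback of a degreewise $\otimes$-pure epimorphism is again one --- holds anyway, since it is an axiom of the exact structure $\cha{G}_{\otimes}$ (equivalently, $\Ext^1_{\otimes}$ is a subfunctor of $\Ext^1$ closed under pullback along $W'\to Y$), so the proof stands.
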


\begin{proof}
By the theorem, every complex has a special $\class{C}$-acyclic  precover and by Proposition~\ref{prop-C-acyclic-properties}, the class ${}_\class{C}\class{W}$ of all $\class{C}$-acyclic complexes is closed under direct limits. It follows from a general result, for example see~\cite[Prop. 3.9]{estrada-guil-odabasi-phantom}, that ${}_\class{C}\class{W}$ is a covering class.
\end{proof}

\section{K-flat covers and the K-flat derived category}\label{sec-k-flat-pure-ac}

In this section we consider the case of K-flat and pure acyclic complexes. Again, $(\cat{G}, \otimes)$ denotes any closed symmetric monoidal Grothendieck category throughout.

\begin{definition}\label{def-k-flat}
A chain complex  $X$ is \emph{K-flat} if $E \tensor X$ is acyclic for all acyclic chain complexes $E$. We denote the class of all K-flat complexes by $\class{KF}$.
\end{definition}

Building on Theorem~\ref{C-acyclics-cotorsion-pair} and Corollary~\ref{corollary-C-acyclic-models} we have our main application: A cofibrantly generated model for the Verdier quotient $K(\cat{G})/\class{KF}$. One might refer to this as the \emph{K-flat derived category} of $(\cat{G},\otimes)$.

\begin{theorem}\label{theorem-K-derived-category}
Let $(\cat{G}, \otimes)$ be any closed symmetric monoidal Grothendieck category.  Then the class $\class{KF}$ of all K-flat complexes is a thick subcategory of $K(\cat{G})$. The Verdier quotient, $K(\cat{G})/\class{KF}$, is a well generated triangulated category and is equivalent to the homotopy category associated to a Hovey triple
$(All, \class{KF}, \class{I})$ on the exact category $\cha{G}_{\otimes}$.
This model structure is cofibrantly generated and satisfies the following:
\begin{enumerate}
\item $\class{I}$ is the class of all complexes with  $\otimes$-pure injective components and such that all chain maps $X \xrightarrow{} I$ are null homotopic whenever $X$ is K-flat.
\item A chain map $f : X \xrightarrow{} Y$ is a weak equivalence in this model structure if and only if its mapping cone is K-flat, equivalently, if and only if $E \otimes X \xrightarrow{E \otimes f} E \otimes Y$ is a homology isomorphism for all acyclic complexes $E$.
\item We have a triangle equivalence, $K(\cat{G})/\class{KF} \cong K(\class{I})$, where $K(\class{I})$ is the strictly full subcategory of $K(\cat{G})$ generated by $\class{I}$.
\end{enumerate}
\end{theorem}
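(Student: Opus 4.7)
The plan is to apply the general machinery of Section~\ref{section-C-acyclic} in the special case where $\class{C}$ is the class of all acyclic complexes of $\cat{G}$. By Definition~\ref{def-C-acyclic}, ${}_{\class{C}}\class{W}$ is then precisely $\class{KF}$, so Corollary~\ref{corollary-C-acyclic-models} immediately delivers a cofibrantly generated Hovey triple $(All,\class{KF},\class{I})$ on the exact category $\cha{G}_{\otimes}$, and the description of $\class{I}$ asserted in (1) is furnished by Theorem~\ref{C-acyclics-cotorsion-pair}(2). What remains is to verify thickness of $\class{KF}$ in $K(\cat{G})$, to prove (2), to identify the associated homotopy category with the Verdier quotient $K(\cat{G})/\class{KF}$ in the form required by (3), and to obtain well-generatedness.

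Thickness of $\class{KF}$ in $K(\cat{G})$ will follow from Proposition~\ref{prop-C-acyclic-properties}: every distinguished triangle in $K(\cat{G})$ arises from a degreewise split short exact sequence, which is in particular degreewise $\otimes$-pure and hence admissible in $\cha{G}_{\otimes}$, so part~(2) of that proposition yields closure under triangles; part~(1) supplies retract closure, and closure under suspension is immediate from $E\otimes X[1]\cong (E\otimes X)[1]$. For part~(2) of the theorem, I would first observe that the mapping cone of $f\colon X\to Y$ sits in a degreewise split short exact sequence $0\to Y\to \mathrm{cone}(f)\to X[1]\to 0$, which remains exact after tensoring with any complex $E$; the associated long exact homology sequence then shows $E\otimes\mathrm{cone}(f)$ is acyclic if and only if $E\otimes f$ is a homology isomorphism, giving the equivalence of the two characterizations. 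To match these with the weak equivalences of the Hovey triple, I would use the standard abelian-model-category fact that when $\class{C}=All$, a morphism is a weak equivalence precisely when its mapping cone lies in $\class{W}$.

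For (3) and the identification with the Verdier quotient, the strategy is to apply the general principle that the homotopy category of an abelian model structure is equivalent to the quotient of its bifibrant objects by the canonical homotopy relation, which on chain complexes coincides with chain homotopy. Here the bifibrants are precisely $\class{I}$, yielding $\mathrm{Ho}(\cha{G}_{\otimes})\cong K(\class{I})$. Combining this with the characterization of weak equivalences from (2) and the universal property of the Verdier quotient for the thick subcategory $\class{KF}\subseteq K(\cat{G})$ produces the chain $K(\cat{G})/\class{KF}\cong \mathrm{Ho}(\cha{G}_{\otimes})\cong K(\class{I})$. Well-generatedness then follows from the fact that the model structure is cofibrantly generated by a set obtained through $\lambda$-presentability considerations in the proof of Theorem~\ref{C-acyclics-cotorsion-pair}, via a theorem of Rosick\'y on combinatorial stable model categories.

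The main obstacle will be the careful passage between $\cha{G}_{\otimes}$ and $\cha{G}$: the Hovey triple lives on the exact structure of degreewise $\otimes$-pure sequences, whereas the thick subcategory and the Verdier quotient live in the ordinary chain homotopy category $K(\cat{G})$. The delicate point is to check that the weak equivalences produced by the model structure on $\cha{G}_{\otimes}$ really are those morphisms of $K(\cat{G})$ whose mapping cone is K-flat, and not some a priori smaller class visible only through $\otimes$-pure admissible short exact sequences. Once this identification is secure, the remaining pieces of the statement assemble from the standard abelian model category dictionary.
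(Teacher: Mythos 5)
Your proposal is correct and follows essentially the same route as the paper: the paper's own proof simply specializes Theorem~\ref{C-acyclics-cotorsion-pair} and Corollary~\ref{corollary-C-acyclic-models} to the case where $\class{C}$ is the class of all acyclic complexes, and defers the identification of the weak equivalences (your acknowledged ``delicate point'') to the arguments of \cite[Theorem~4.4]{gillespie-K-flat} and \cite[Theorem~4.1(ii)]{gillespie-ac-pure-proj}. Your additional details --- thickness via degreewise split triangles, the mapping-cone long exact sequence for (2), bifibrants modulo chain homotopy for (3), and well-generatedness from cofibrant generation --- are exactly the content of those cited arguments.
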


\begin{proof}
It follows from Theorem~\ref{C-acyclics-cotorsion-pair} and Corollary~\ref{corollary-C-acyclic-models}. Arguments similar to those in \cite[Theorem~4.4]{gillespie-K-flat} and  \cite[Theorem~4.1(ii)]{gillespie-ac-pure-proj} will show that the weak equivalences are as described.
\end{proof}

Of course we also have the following special case of Corollary~\ref{corollary-C-acyclic-covers}.

\begin{corollary}\label{corollary-K-flat-covers}
Let $(\cat{G}, \otimes)$ be any closed symmetric monoidal Grothendieck category. Then the class $\class{KF}$ of all K-flat complexes is a covering class.

In particular, for any scheme $\mathbb{X}$, any chain complex of quasi-coherent sheaves has a K-flat cover.
\end{corollary}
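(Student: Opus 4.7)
The plan is to simply instantiate Corollary~\ref{corollary-C-acyclic-covers} at a particular choice of $\class{C}$. By Definition~\ref{def-k-flat}, a complex $X$ is K-flat precisely when $E \otimes X$ is acyclic for every acyclic complex $E$. Comparing this with Definition~\ref{def-C-acyclic}, we see immediately that $\class{KF} = {}_\class{C}\class{W}$ where $\class{C}$ is taken to be the class of all acyclic chain complexes in $\cha{G}$. Hence the first assertion is nothing more than Corollary~\ref{corollary-C-acyclic-covers} applied to this $\class{C}$.

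For the convenience of the reader I would briefly recall the ingredients packaged in Corollary~\ref{corollary-C-acyclic-covers}: Theorem~\ref{C-acyclics-cotorsion-pair} provides the complete cotorsion pair $({}_\class{C}\class{W}, \class{F})$ in the exact category $\cha{G}_{\otimes}$, whose left half produces special $\class{C}$-acyclic precovers; Proposition~\ref{prop-C-acyclic-properties}(1) shows that ${}_\class{C}\class{W}$ is closed under direct limits; and the standard fact that a precovering class closed under direct limits is a covering class (\cite[Prop.~3.9]{estrada-guil-odabasi-phantom}) then yields covers. Specializing to $\class{C}$ equal to the acyclics, this gives K-flat covers for arbitrary $X \in \cha{G}$.

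For the statement on quasi-coherent sheaves, the only additional input is the observation that for any scheme $\mathbb{X}$, the category $\text{Qcoh}(\mathbb{X})$ of quasi-coherent sheaves, equipped with its usual tensor product, is a closed symmetric monoidal Grothendieck category. Hence the hypothesis of the first part of the corollary holds with $(\cat{G}, \otimes) = (\text{Qcoh}(\mathbb{X}), \otimes)$, and the existence of K-flat covers follows.

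There is no substantive obstacle here: all of the work was done in assembling Theorem~\ref{C-acyclics-cotorsion-pair} and Corollary~\ref{corollary-C-acyclic-covers}. The only thing worth double-checking in writing the proof is the translation between the $\class{C}$-acyclic language and the K-flat language, but this is transparent from the definitions.
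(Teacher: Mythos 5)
Your proof is correct and matches the paper exactly: the paper presents this corollary as the special case of Corollary~\ref{corollary-C-acyclic-covers} obtained by taking $\class{C}$ to be the class of all acyclic complexes, which is precisely your argument. The added remarks unpacking Corollary~\ref{corollary-C-acyclic-covers} and noting that quasi-coherent sheaves on any scheme form a closed symmetric monoidal Grothendieck category are accurate and harmless.
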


A special case of K-flat complexes are the pure acyclic complexes. 
\begin{definition}\label{def-pure-acyclic}
A chain complex $X$ is called \textbf{$\otimes$-pure acyclic} (or simply \textbf{pure acyclic}) if $C \tensor X$ is acyclic for \emph{all} complexes $C \in \cha{G}$.
\end{definition}
We prove in Proposition~\ref{prop-pur-acyc} that it is enough to only require that $M \tensor X := S^0(M) \tensor X$ is acyclic for all objects $M \in \cat{G}$. Our proof will use the following lemma. Here $\class{S}$ denotes a set which cogenerates the $\otimes$-pure injective cotorsion pair $(\cat{G}, \class{PI}_{\otimes})$. Such a set must exist; see~\cite[Lemma~3.6 and Prop.~3.7]{estrada-gillespie-odabasi}.

\begin{lemma}\label{lemma-injective-cot-pair}
 Let $\class{S}$ be a set cogenerating the $\otimes$-pure injective cotorsion pair $(\cat{G}, \class{PI}_{\otimes})$ in the exact category $(\cat{G}, \mathcal{P}_{\otimes})$. Then the injective cotorsion pair $(\cha{G}, \tilclass{PI}_{\otimes})$ in the exact category $\cha{G}_\otimes$ (see \cite[Prop.~3.11]{estrada-gillespie-odabasi}) is cogenerated by the set
 $$\class{X} := \{S^n(M) \,|\, M \in \class{S}, n \in \Z \}.$$
 Consequently, every chain complex $X \in \cha{G}$ is a direct summand
 of a transfinite degreewise $\tensor$-pure extension (i.e. a transfinite extension in the exact category $\cha{G}_\otimes$) of complexes in the set
 $$\class{X} = \{S^n(M) \,|\, M \in \class{S}, n \in \Z \} \cup \{D^n(L_i)\}$$
 where $\{L_i\}$ is a set of generators for the pure exact structure $\class{P}_\otimes$.
\end{lemma}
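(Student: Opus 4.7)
The plan is to establish the identification $\rightperp{\class{X}} = \tilclass{PI}_\otimes$, since then the ``consequently'' assertion follows immediately from Lemma~\ref{cogen-by-set-complete}. The inclusion $\tilclass{PI}_\otimes \subseteq \rightperp{\class{X}}$ holds because $\tilclass{PI}_\otimes$ consists of injectives in $\cha{G}_\otimes$, so the content is the reverse inclusion. Suppose $Y \in \rightperp{\class{X}}$. Applying $\Hom_{\cha{G}_\otimes}(-, Y)$ to the degreewise split short exact sequence $0 \to S^{n-1}(M) \to D^n(M) \to S^n(M) \to 0$ and using the identification $\Ext^1_\otimes(D^n(M), Y) \cong \Ext^1_{\mathcal{P}_\otimes}(M, Y_n)$ from the proof of Theorem~\ref{C-acyclics-cotorsion-pair}, one first concludes that each $Y_n$ is $\otimes$-pure injective: the relevant $\Ext^1_\otimes(D^n(M), Y)$ is trapped between two zero groups in the long exact sequence, and the cogeneration hypothesis on $\class{S}$ then forces $Y_n \in \class{PI}_\otimes$.

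With pure injective components in hand, that same long exact sequence simplifies to
\[
 \Ext^1_\otimes(S^n(A), Y) \;\cong\; \cok\bigl[\Hom(A, Y_n) \xrightarrow{d_n \circ -} \Hom(A, Z_{n-1}Y)\bigr],
\]
valid for every $A \in \cat{G}$. The next step is to promote the vanishing of $\Ext^1_\otimes(S^n(A), Y)$ from $A \in \class{S}$ to arbitrary $A \in \cat{G}$. After noting that enlarging $\class{S}$ by the generators $\{L_i\}$ preserves cogeneration of $(\cat{G}, \class{PI}_\otimes)$ (since pure injectives automatically lie in $\rightperp{\{L_i\}}$), the $\cat{G}$-analogue of Lemma~\ref{cogen-by-set-complete} expresses every $A \in \cat{G}$ as a direct summand of a transfinite $\otimes$-pure extension of elements of $\class{S}$. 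Applying the exact functor $S^n$ and invoking Eklof's lemma in the efficient exact category $\cha{G}_\otimes$ then yields $\Ext^1_\otimes(S^n(A), Y) = 0$ for every $A \in \cat{G}$. Specializing the cokernel formula to $A = Z_{n-1}Y$ and lifting the identity produces a section $s_n$ of $d_n$; hence $d_n$ is split surjective, $Y$ is acyclic, every short exact sequence $0 \to Z_nY \to Y_n \to Z_{n-1}Y \to 0$ splits, and $Y \cong \bigoplus_n D^n(Z_{n-1}Y) \in \tilclass{PI}_\otimes$.

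For the ``consequently'' clause I would invoke Lemma~\ref{cogen-by-set-complete} applied to $\class{X}$: its standing hypothesis $\{D^n(L_i)\} \subseteq \leftperp{(\rightperp{\class{X}})}$ is precisely the first paragraph's conclusion (each $Y_n$ is $\otimes$-pure injective, so $\Ext^1_\otimes(D^n(L_i), Y) \cong \Ext^1_{\mathcal{P}_\otimes}(L_i, Y_n) = 0$ whenever $Y \in \rightperp{\class{X}}$), and the lemma then describes $\leftperp{(\rightperp{\class{X}})}$, which the preceding argument identifies with $\cha{G}$, as the class of direct summands of transfinite degreewise $\otimes$-pure extensions of $\class{X} \cup \{D^n(L_i)\}$. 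The main obstacle is the Eklof-style propagation in the middle paragraph: it requires $\class{S}$ to be rich enough to filter every object of $\cat{G}$ via transfinite $\otimes$-pure extensions of $\class{S}$-objects, which is exactly what forces the tacit enlargement of $\class{S}$ to include the generating family $\{L_i\}$.
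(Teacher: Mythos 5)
Your argument is correct, but it reaches the key inclusion $\rightperp{\class{X}} \subseteq \tilclass{PI}_{\otimes}$ by a genuinely different route than the paper. The paper first uses the vanishing of $\Ext^1_{\otimes}(S^n(L_i),Y)$ for the generators $L_i$ to deduce that each $\Hom_{\cat{G}}(L_i,Y_n) \to \Hom_{\cat{G}}(L_i,Z_{n-1}Y)$ is epic, hence that every $0 \to Z_nY \to Y_n \to Z_{n-1}Y \to 0$ is $\otimes$-pure exact; it then invokes the isomorphism $\Ext^1_{\otimes}(S^n(M),Y) \cong \Ext^1_{\class{P}_{\otimes}}(M,Z_nY)$, valid once $Y$ is known to be $\otimes$-pure exact, to conclude directly from $M \in \class{S}$ that the cycles $Z_nY$ are $\otimes$-pure injective. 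You instead first extract pure injectivity of the \emph{components} $Y_n$ by trapping $\Ext^1_{\otimes}(D^n(M),Y)$ between two zero groups, then propagate the sphere-vanishing from $\class{S}$ to all of $\cat{G}$ via filtrations and Eklof's lemma, and finally split the differentials by lifting $1_{Z_{n-1}Y}$. Your route buys independence from the ``restricted Ext isomorphism'' that the paper only sketches (citing \cite[Lemma~4.2]{gillespie-degreewise-model-strucs} and saying ``one can check''), at the cost of an extra Eklof propagation step; both are legitimate. Note that both arguments silently require $\Ext^1_{\otimes}(S^n(L_i),Y)=0$ for the generators $L_i$, i.e.\ in effect that $\{L_i\}$ may be adjoined to $\class{S}$; the paper uses this without comment, whereas you flag the enlargement explicitly. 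Since adding the $L_i$ does not change $\rightperp{\class{S}} = \class{PI}_{\otimes}$ and the lemma is only ever applied to such an $\class{S}$, this is harmless in either treatment.
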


\begin{proof}
Suppose $Y \in \rightperp{\class{X}}$, the right orthogonal in the exact category $\cha{G}_{\otimes}$. For each $L_i \in \{L_i\}$ we have an obvious short exact sequence
$$0 \xrightarrow{} S^{n-1}(L_i) \xrightarrow{} D^n(L_i) \xrightarrow{} S^n(L_i) \xrightarrow{} 0$$
in the exact category $\cha{G}_\otimes$. Applying $\Hom_{\cha{G}}(-,Y)$ we obtain an exact sequence of abelian groups
$$ \Hom_{\cha{G}}(D^n(L_i) ,Y) \xrightarrow{} \Hom_{\cha{G}}(S^{n-1}(L_i)  ,Y) \xrightarrow{} \Ext^1_{\otimes}(S^n(L_i),Y) = 0.$$
It follows that $\Hom_{\cat{G}}(L_i ,Y_n) \xrightarrow{\Hom_{\cat{G}}(L_i,d_n)} \Hom_{\cat{G}}(L_i,Z_{n-1}Y)$ is an epimorphism. Since $\{L_i\}$ is a set of generators for the pure exact structure $\class{P}_\otimes$, it means each
$0 \xrightarrow{} Z_nY \xrightarrow{} Y_n \xrightarrow{d_n} Z_{n-1}Y \xrightarrow{} 0$ is a $\otimes$-pure exact sequence. Being a $\otimes$-pure exact complex, $Y$ must be exact in the ordinary sense, and so we have a standard isomorphism of Yoneda Ext groups
$$\Ext^1_{\cha{G}}(S^n(M),Y) \cong \Ext^1_{\cat{G}}(M,Z_nY)$$
for any object $M \in \cat{G}$. For example, there is proof of this in~\cite[Lemma~4.2]{gillespie-degreewise-model-strucs}. Following the proof there, one can check that, since $Y$ is $\otimes$-pure exact, the isomorphism restricts to an isomorphism of the subgroups
$$\Ext^1_{\otimes}(S^n(M),Y) \cong \Ext^1_{\class{P}_\otimes}(M,Z_nY).$$
In particular, we have
$$\Ext^1_{\class{P}_\otimes}(M,Z_nY) \cong \Ext^1_{\otimes}(S^n(M),Y) = 0$$ for each $M \in \class{S}$.  This proves $Z_nY$ is $\otimes$-pure injective, and thus $Y \in \tilclass{PI}_{\otimes}$.
\end{proof}

\begin{proposition}\label{prop-pur-acyc}
A complex $X$ is pure acyclic (in the strong sense of Definition \ref{def-pure-acyclic}) if and only if $M \tensor X := S^0(M) \tensor X$ is acyclic for all objects $M \in \cat{G}$.
\end{proposition}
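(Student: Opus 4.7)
The ``only if'' direction is immediate, by taking $C = S^0(M)$ in the definition. For the ``if'' direction, my plan is to reduce to the generators supplied by Lemma~\ref{lemma-injective-cot-pair}, which expresses every complex $C \in \cha{G}$ as a retract of a transfinite degreewise $\otimes$-pure extension of complexes in the set $\class{X} = \{\,S^n(M) \mid M \in \class{S},\, n \in \Z\,\} \cup \{\,D^n(L_i)\,\}$. Fixing $X$ satisfying the hypothesis that $M \otimes X$ is acyclic for every $M \in \cat{G}$, I would introduce the auxiliary class
\[
\class{W}(X) := \{\, C \in \cha{G} \mid C \otimes X \text{ is acyclic}\,\}
\]
and prove $\class{W}(X) = \cha{G}$ by showing $\class{X} \subseteq \class{W}(X)$ together with the closure properties needed to invoke Lemma~\ref{lemma-injective-cot-pair}.

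First I would establish that $\class{W}(X)$ satisfies closure properties analogous to those for the class ${}_\class{C}\class{W}$ in Proposition~\ref{prop-C-acyclic-properties}. By the symmetry of the tensor product, the fact that $- \otimes X$ preserves direct sums, retracts, and direct limits, combined with Lemma~\ref{lemma-degreewise-purity} (which turns any degreewise $\otimes$-pure short exact sequence into a genuine short exact sequence of complexes after tensoring with $X$), the same arguments go through verbatim to show that $\class{W}(X)$ is closed under direct sums, retracts, and direct limits, contains all contractible complexes, and is closed under transfinite degreewise $\otimes$-pure extensions in $\cha{G}_{\otimes}$.

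Next I would verify $\class{X} \subseteq \class{W}(X)$. Each disk complex $D^n(L_i)$ is contractible, hence lies in $\class{W}(X)$ by the closure properties just established. For a sphere complex $S^n(M)$ with $M \in \class{S}$, a direct unwinding of the definition of the tensor product on $\cha{G}$ identifies $S^n(M) \otimes X$ with a shift of $S^0(M) \otimes X = M \otimes X$ (up to a sign on the differential, which is irrelevant for acyclicity), and $M \otimes X$ is acyclic by hypothesis.

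Combining $\class{X} \subseteq \class{W}(X)$ with the closure of $\class{W}(X)$ under retracts and transfinite degreewise $\otimes$-pure extensions, Lemma~\ref{lemma-injective-cot-pair} forces $\class{W}(X) = \cha{G}$, which is exactly the claim. I do not anticipate a substantive obstacle: the real work has already been done in Lemma~\ref{lemma-injective-cot-pair} and Proposition~\ref{prop-C-acyclic-properties}, and this proposition amounts to a routine verification that their outputs combine to give the desired reduction from objects of $\cat{G}$ to arbitrary complexes in $\cha{G}$.
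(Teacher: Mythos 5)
Your proposal is correct and follows essentially the same route as the paper: the paper also defines the class ${}_X\class{W}$ of complexes $C$ with $C\otimes X$ acyclic, obtains its closure under retracts and transfinite degreewise $\otimes$-pure extensions from Proposition~\ref{prop-C-acyclic-properties} (applied with $\class{C}=\{X\}$), checks that the spheres and disks of Lemma~\ref{lemma-injective-cot-pair} lie in it, and concludes that ${}_X\class{W}$ is everything. The only cosmetic difference is that you re-verify the closure properties directly rather than citing Proposition~\ref{prop-C-acyclic-properties} with a singleton class.
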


\begin{proof}
For any chain complex $X$, Proposition~\ref{prop-C-acyclic-properties} implies (by taking $\class{C} = \{X\}$) that the class ${}_X\class{W}$, of all chain complexes $W$ for which $X \otimes W \cong W \otimes X$ is acyclic, is closed under direct summands and transfinite extensions in the exact category  $\cha{G}_\otimes$. If we assume $X$ is a complex for which $M \tensor X := S^0(M) \tensor X$ is acyclic for all objects $M \in \cat{G}$, then certainly $S^n(M) \in {}_X\class{W}$ for all $n$ and $M \in \cat{G}$. It follows easily that also $D^n(M) \in {}_X\class{W}$ for all $n$ and $M \in \cat{G}$, see for example~\cite[Exercise~1.2.5]{weibel}. But by Lemma~\ref{lemma-injective-cot-pair}, every complex must be a direct summand of a transfinite extension in the exact category $\cha{G}_\otimes$ of such sphere or disk complexes. So ${}_X\class{W}$ must be the class of all complexes, which means $X$ is pure acyclic in the sense of Definition \ref{def-pure-acyclic}.
\end{proof}

\section{Consequences of $\otimes$-flat generators}\label{sec-flat-generators}

We continue to let $(\cat{G}, \otimes)$ denote a closed symmetric monoidal Grothendieck category, and $\class{KF}$ denote the class of K-flat complexes.
An object $F \in \cat{G}$ will be called $\otimes$-flat if the functor $F\otimes -$ is exact. Note that a sphere complex $S^n(F)$ is K-flat if and only if $F$ is $\otimes$-flat. In this section we look at the consequences of the  assumption that $\cat{G}$ possesses a set of generators, $\{F_i\}$, with each $F_i$ a $\tensor$-flat object. For short, we will express this by saying \emph{$\cat{G}$ has a set of $\otimes$-flat generators}.

\begin{lemma}\label{lemma-K-flat-perp}
Assume $\cat{G}$ has a set of $\otimes$-flat generators. Then every complex in $\rightperp{\class{KF}}$, the right Ext-orthogonal  in the exact category $\cha{G}_{\otimes}$, must be acyclic.
\end{lemma}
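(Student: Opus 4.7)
My plan is to test $\rightperp{\class{KF}}$-membership against the sphere complexes on the $\otimes$-flat generators. The first observation is that for any $\otimes$-flat object $F$, the sphere complex $S^n(F)$ is K-flat: given any acyclic complex $E$, the complex $E \otimes S^n(F)$ is the $n$-th shift of the complex obtained by applying $F \otimes -$ degreewise to $E$, and this stays acyclic because $F \otimes -$ is exact. The disk complex $D^n(F)$ is also K-flat (being contractible).

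Fix an integer $n$ and a generator $F_i$. I would then exploit the canonical short exact sequence
\[
0 \to S^{n-1}(F_i) \to D^n(F_i) \to S^n(F_i) \to 0,
\]
which splits in each degree and is therefore a short exact sequence in $\cha{G}_\otimes$. Applying $\Hom_{\cha{G}}(-,Y)$ and using the standard identifications $\Hom_{\cha{G}}(S^{n-1}(F_i),Y) \cong \Hom_{\cat{G}}(F_i, Z_{n-1}Y)$ and $\Hom_{\cha{G}}(D^n(F_i),Y) \cong \Hom_{\cat{G}}(F_i, Y_n)$, the resulting long exact sequence, together with $\Ext^1_\otimes(S^n(F_i), Y) = 0$ (which holds by the hypothesis $Y \in \rightperp{\class{KF}}$ and the K-flatness of $S^n(F_i)$), collapses to the surjectivity of
\[
\Hom_{\cat{G}}(F_i, d_n) \colon \Hom_{\cat{G}}(F_i, Y_n) \twoheadrightarrow \Hom_{\cat{G}}(F_i, Z_{n-1}Y).
\]

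To finish, let $q \colon Z_{n-1}Y \twoheadrightarrow H_{n-1}(Y)$ denote the canonical projection, and note that $q \circ d_n = 0$. The surjectivity above says that every morphism $\phi \colon F_i \to Z_{n-1}Y$ factors as $d_n \circ \psi$ for some $\psi$, so $q \circ \phi = 0$. Picking any epimorphism $e \colon \bigoplus_j F_{i_j} \twoheadrightarrow Z_{n-1}Y$ from a direct sum of generators (available because $\{F_i\}$ is a generating set of $\cat{G}$), each component of $e$ vanishes under $q$, whence $q \circ e = 0$; since $e$ is epic this forces $q = 0$, so $H_{n-1}(Y) = 0$. Running this over every $n$ shows $Y$ is acyclic. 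I do not anticipate any serious obstacle: the essential insight is simply that the $\otimes$-flatness of the $F_i$ is exactly what upgrades the sphere complexes $S^n(F_i)$ into legitimate K-flat test objects, and then the generating property does the rest.
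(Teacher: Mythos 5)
Your proof is correct and follows essentially the same route as the paper's: both test $Y$ against the K-flat sphere complexes $S^n(F_i)$ on the $\otimes$-flat generators, convert the vanishing of $\Ext^1_{\otimes}(S^n(F_i),Y)$ into the acyclicity of $\Hom_{\cat{G}}(F_i,Y)$, and then use the generating property to conclude. The only cosmetic difference is that you extract the needed surjectivity from the disk--sphere long exact sequence in $\cha{G}_{\otimes}$, whereas the paper identifies $\Ext^1_{\otimes}(S^n(F_i),E)$ with $\Ext^1_{dw}(S^n(F_i),E)\cong H_{n-1}[\Hom_{\cat{G}}(F_i,E)]$ using that complexes in $\rightperp{\class{KF}}$ have $\otimes$-pure injective components.
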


\begin{proof}
Let $\{F_i\}$ be a set of $\tensor$-flat generators for $\cat{G}$.
We already know that any $E \in \rightperp{\class{KF}}$ must have $\otimes$-pure injective components. So for each $F_i$, we have
$$0 = \Ext^1_{\otimes}(S^n(F_i),E) = \Ext^1_{dw}(S^n(F_i),E) \cong H_{n-1}[\Hom_{\cat{G}}(F_i,E)].$$
So each complex $\Hom_{\cat{G}}(F_i,E)$ is acyclic, and the assumption that $\{F_i\}$ is a generating set implies $E$ is acyclic.
\end{proof}

\begin{proposition}\label{prop-K-flat-acyclic}
Assume $\cat{G}$ has a set of $\otimes$-flat generators.
$X$ is both acyclic and K-flat if and only if $X$ is pure acyclic.
\end{proposition}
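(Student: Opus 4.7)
The plan is to invoke Proposition~\ref{prop-pur-acyc}, which says $X$ is pure acyclic if and only if $S^0(M) \otimes X$ is acyclic for every $M \in \cat{G}$. The direction ``$\impliedby$'' is immediate: if $X$ is pure acyclic, then $C \otimes X$ is acyclic for every complex $C \in \cha{G}$, so in particular $X$ itself is acyclic (take $C = S^0(I)$) and K-flat (a fortiori). The content lies in ``$\implies$'': assuming $X$ is acyclic and K-flat, I will show $S^0(M) \otimes X$ is acyclic for each $M \in \cat{G}$.

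Fix $M \in \cat{G}$. Since $\cat{G}$ has a set $\{F_i\}$ of $\otimes$-flat generators, and since coproducts of $\otimes$-flats are $\otimes$-flat (as $- \otimes Y$ is a left adjoint by closed monoidality), I can inductively build a resolution $\cdots \to F_1 \to F_0 \to M \to 0$ in which each $F_n$ is $\otimes$-flat. Let $F_\bullet$ denote the complex $\cdots \to F_1 \to F_0 \to 0$ concentrated in nonnegative degrees, and let $\phi\colon F_\bullet \to S^0(M)$ denote the augmentation, which is a quasi-isomorphism. Because the mapping cone construction commutes with $- \otimes X$, there is an isomorphism $\mathrm{cone}(\phi \otimes X) \cong \mathrm{cone}(\phi) \otimes X$. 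Since $\mathrm{cone}(\phi)$ is acyclic and $X$ is K-flat, $\mathrm{cone}(\phi) \otimes X$ is acyclic, making $\phi \otimes X\colon F_\bullet \otimes X \to S^0(M) \otimes X$ a quasi-isomorphism. It therefore suffices to show $F_\bullet \otimes X$ is acyclic.

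For this, consider the brutal truncations $\sigma_{\leq n}(F_\bullet)$, which form an exhaustive filtration of $F_\bullet$ by subcomplexes (with base case $\sigma_{\leq 0}(F_\bullet) = S^0(F_0)$) fitting into degreewise split short exact sequences
\[
0 \to \sigma_{\leq n-1}(F_\bullet) \to \sigma_{\leq n}(F_\bullet) \to S^n(F_n) \to 0.
\]
The base case $\sigma_{\leq 0}(F_\bullet) \otimes X = F_0 \otimes X$ is acyclic because $F_0$ is $\otimes$-flat and $X$ is acyclic. Inductively, tensoring the above sequence with $X$ preserves exactness by the degreewise splitting; $S^n(F_n) \otimes X$ is a shift of $F_n \otimes X$ (hence acyclic); and the long exact sequence yields acyclicity of $\sigma_{\leq n}(F_\bullet) \otimes X$. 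Finally, $F_\bullet \otimes X = \varinjlim_n \sigma_{\leq n}(F_\bullet) \otimes X$ is a direct limit of acyclic complexes in the Grothendieck category $\cha{G}$, hence acyclic. The main subtlety is arranging this filtration argument cleanly; otherwise the proof is a direct application of K-flatness to a flat resolution of $M$.
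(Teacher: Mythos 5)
Your proof is correct, but it takes a genuinely different route from the paper's. The paper does not reduce to sphere complexes at all: it takes an \emph{arbitrary} complex $C$, uses the completeness of the K-flat cotorsion pair from Theorem~\ref{theorem-K-derived-category} to produce a degreewise $\otimes$-pure short exact sequence $0 \to E \to K \to C \to 0$ with $K$ K-flat and $E \in \rightperp{\class{KF}}$, and then tensors with $X$: the term $E \otimes X$ is acyclic because $X$ is K-flat and $E$ is acyclic by Lemma~\ref{lemma-K-flat-perp} (this is where the $\otimes$-flat generators enter for the paper), while $K \otimes X \cong X \otimes K$ is acyclic because $X$ is acyclic and $K$ is K-flat; the 2-out-of-3 property finishes. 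You instead invoke Proposition~\ref{prop-pur-acyc} to reduce to checking $S^0(M) \otimes X$ for objects $M$, and use the $\otimes$-flat generators in the most classical way --- to build a $\otimes$-flat resolution $F_\bullet \to M$ --- then combine K-flatness of $X$ (applied to the acyclic cone of the augmentation) with a brutal-truncation filtration and exactness of direct limits. All the steps check out: coproducts of $\otimes$-flats are $\otimes$-flat since $\otimes$ preserves coproducts and $\cat{G}$ satisfies AB4, the truncation sequences are degreewise split so they survive $- \otimes X$, and homology commutes with the filtered colimit. What each approach buys: the paper's argument is shorter given the cotorsion-pair machinery already in place and handles arbitrary $C$ directly, bypassing Proposition~\ref{prop-pur-acyc}; yours is more elementary and self-contained in spirit (a Spaltenstein-style resolution argument), at the cost of leaning on Proposition~\ref{prop-pur-acyc}, which itself rests on the set-cogeneration machinery of Lemma~\ref{lemma-injective-cot-pair}. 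Both uses of the flat-generators hypothesis are essential to their respective arguments.
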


\begin{proof}
$(\impliedby)$ This direction is clear, for if $I$ is the unit of the monoidal structure, then $S^0(I)\otimes X = X$ is acyclic.
For $(\implies)$, assume $X$ is both acyclic and K-flat. Let $C \in \cha{G}$ be arbitrary. By Theorem~\ref{theorem-K-derived-category} we can find a degreewise $\otimes$-pure sequence
$0 \xrightarrow{} E \xrightarrow{} K \xrightarrow{} C \xrightarrow{} 0$ where $K \in\class{KF}$ and $E \in \rightperp{\class{KF}}$. Lemma~\ref{lemma-degreewise-purity} promises we get another short exact sequence of complexes $0 \xrightarrow{} E \otimes X \xrightarrow{} K\otimes X \xrightarrow{} C\otimes X \xrightarrow{} 0$. By the assumption that $X$ is K-flat, and by Lemma~\ref{lemma-K-flat-perp}, we obtain that $E \otimes X$ is acyclic.  By the assumption that $X$ is acyclic, and because $K$ is K-flat, we obtain that $K\otimes X \cong X\otimes K$ is acyclic too. It follows from the 2 out of 3 property that $C\otimes X$ is also acyclic, proving $X$ is pure acyclic.
\end{proof}

To state the next result we set $\class{D}_{\textnormal{K-flat}}(\cat{G}) := K(\cat{G})/\class{KF}$.

\begin{theorem}\label{theorem-recollement}
Let $(\cat{G}, \otimes)$ be any closed symmetric monoidal Grothendieck category possessing a set of $\otimes$-flat generators.
We have a recollement of well generated triangulated categories
\[
\xy
(-28,0)*+{\class{D}_{\textnormal{K-flat}}(\cat{G})};
(0,0)*+{\class{D}_{\otimes\textnormal{-pur}}(\cat{G})};
(25,0)*+{\class{D}(\cat{G})};
{(-19,0) \ar (-10,0)};
{(-10,0) \ar@<0.5em> (-19,0)};
{(-10,0) \ar@<-0.5em> (-19,0)};
{(10,0) \ar (19,0)};
{(19,0) \ar@<0.5em> (10,0)};
{(19,0) \ar@<-0.5em> (10,0)};
\endxy
.\]
\end{theorem}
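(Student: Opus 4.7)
The plan is to obtain this recollement from two nested Bousfield localizations of the chain homotopy category $K(\cat{G})$: one with kernel the class $\class{PA}$ of pure acyclic complexes, the other with kernel the class $\class{E}$ of all acyclic complexes. By definition $\class{D}_{\otimes\textnormal{-pur}}(\cat{G}) = K(\cat{G})/\class{PA}$ and $\class{D}(\cat{G}) = K(\cat{G})/\class{E}$, and since $\class{PA}\subseteq\class{E}$ we have a Verdier localization sequence $\class{E}/\class{PA} \hookrightarrow \class{D}_{\otimes\textnormal{-pur}}(\cat{G}) \xrightarrow{j^*} \class{D}(\cat{G})$. The work splits into (i) identifying the kernel $\class{E}/\class{PA}$ with $\class{D}_{\textnormal{K-flat}}(\cat{G}) = K(\cat{G})/\class{KF}$ and (ii) upgrading the localization sequence to a recollement.

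For (i), Proposition~\ref{prop-K-flat-acyclic} gives $\class{KF}\cap\class{E} = \class{PA}$, so the composition $\class{E}\hookrightarrow K(\cat{G})\twoheadrightarrow K(\cat{G})/\class{KF}$ kills exactly $\class{PA}$ and descends to a canonical functor $\class{E}/\class{PA}\to \class{D}_{\textnormal{K-flat}}(\cat{G})$. Essential surjectivity is handed to us by Lemma~\ref{lemma-K-flat-perp}: the fibrant class $\class{I}$ of the K-flat Hovey triple in Theorem~\ref{theorem-K-derived-category} is precisely $\rightperp{\class{KF}}$, which under the flat generators hypothesis lies inside $\class{E}$, so every object of $\class{D}_{\textnormal{K-flat}}(\cat{G})$ is represented, via fibrant replacement, by an acyclic complex. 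Full faithfulness is then a Hom-set calculation: by Theorem~\ref{theorem-K-derived-category}, morphisms in $\class{D}_{\textnormal{K-flat}}(\cat{G})$ between acyclic complexes $E_1, E_2$ can be computed as chain-homotopy classes of maps $E_1 \to \tilde{E}_2$, with $\tilde{E}_2\in\class{I}\subseteq \class{E}$ a fibrant replacement of $E_2$; the same replacement computes morphisms in $\class{E}/\class{PA}$ since members of $\class{I}$ are right-orthogonal to $\class{PA}\subseteq\class{KF}$.

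To upgrade to a recollement, it remains to produce both a right and a left adjoint to $j^*$. The right adjoint $j_*$ comes from K-injective resolutions, which exist in any Grothendieck category; K-injective complexes are right-orthogonal to all acyclic complexes, and in particular to $\class{PA}$, so they represent $\class{D}(\cat{G})$ fully faithfully inside $K(\cat{G})/\class{PA}$. The left adjoint $j_!$ comes from K-flat resolutions: under the flat generators hypothesis, every complex $X$ admits a quasi-isomorphism $K\xrightarrow{\sim} X$ with $K$ K-flat, and $X\mapsto K$ descends to a fully faithful functor $\class{D}(\cat{G})\hookrightarrow\class{D}_{\otimes\textnormal{-pur}}(\cat{G})$, since by Proposition~\ref{prop-K-flat-acyclic} two K-flat complexes agree in $\class{D}(\cat{G})$ exactly when they agree in $K(\cat{G})/\class{PA}$. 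The adjoints $i^*, i^!$ of the fully faithful embedding $i_*\uc\class{D}_{\textnormal{K-flat}}(\cat{G})\hookrightarrow\class{D}_{\otimes\textnormal{-pur}}(\cat{G})$ are then recovered from the standard recollement triangles $j_!j^*X\to X\to i_*i^*X$ and $i_*i^!X \to X \to j_*j^*X$.

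Well-generatedness of all three triangulated categories follows from their realizations as homotopy categories of cofibrantly generated abelian model structures on $\cha{G}_\otimes$: Theorem~\ref{theorem-K-derived-category} for $\class{D}_{\textnormal{K-flat}}(\cat{G})$, the pure-derived Hovey triple $(\textnormal{All}, \class{PA}, \tilclass{PI}_\otimes)$ from~\cite{estrada-gillespie-odabasi} for $\class{D}_{\otimes\textnormal{-pur}}(\cat{G})$, and the K-flat-based model structure (cf.\ Theorem~\ref{theorem-K-flat-model-derived-cat}) for $\class{D}(\cat{G})$. The main obstacle I anticipate is constructing $j_!$ and verifying its compatibility with the pure-acyclic quotient; once the existence of K-flat resolutions under the flat generators hypothesis is secured, the rest of the recollement is forced by Proposition~\ref{prop-K-flat-acyclic} and unwinds cleanly.
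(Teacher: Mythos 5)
Your argument is correct in outline but takes a genuinely different route from the paper. The paper never constructs the six functors by hand: it realizes all three vertices as homotopy categories of injective-type abelian model structures on the single exact category $\cha{G}_{\otimes}$ --- namely $\mathfrak{M}_1=(All,\class{E}_{\otimes},\class{F}_1)$ and $\mathfrak{M}_3=(All,\class{E},\class{F}_3)$ obtained from Corollary~\ref{corollary-C-acyclic-models} by taking $\class{C}$ to be all complexes, respectively $\class{C}=\{S^0(I)\}$, together with $\mathfrak{M}_2=(All,\class{KF},\class{I})$ from Theorem~\ref{theorem-K-derived-category} --- and then invokes the general recollement theorem for triples of injective cotorsion pairs, \cite[Theorem~4.6]{gillespie-recollement}, whose hypothesis reduces to the single identity $\class{KF}\cap\class{E}=\class{E}_{\otimes}$ of Proposition~\ref{prop-K-flat-acyclic}. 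You instead work directly with the Verdier localization sequence $\class{E}/\class{E}_{\otimes}\to K(\cat{G})/\class{E}_{\otimes}\to K(\cat{G})/\class{E}$, identify its kernel with $K(\cat{G})/\class{KF}$, and build $j_*$ and $j_!$ from K-injective and K-flat resolutions. Both proofs ultimately rest on the same two substantive inputs, Proposition~\ref{prop-K-flat-acyclic} and Lemma~\ref{lemma-K-flat-perp} (which is where the $\otimes$-flat generators enter), so neither is more general; what your route buys is an explicit description of all six functors of the recollement, at the cost of several Hom-set verifications that the black-box theorem packages away.

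Two points should be firmed up, though neither is a fatal gap. First, the ``main obstacle'' you flag --- existence of K-flat resolutions --- is already secured by results you cite: completeness of the cotorsion pair $(\class{KF},\class{I})$ from Theorem~\ref{theorem-K-derived-category} yields $0\to I\to K\to X\to 0$ with $K\in\class{KF}$ and $I\in\class{I}$, and Lemma~\ref{lemma-K-flat-perp} forces $I$ to be acyclic, so $K\to X$ is a quasi-isomorphism. Second, full faithfulness of $j_!$ requires the natural map $\Hom_{K(\cat{G})/\class{E}_{\otimes}}(K_1,K_2)\to\Hom_{K(\cat{G})/\class{E}}(K_1,K_2)$ to be bijective for K-flat $K_1,K_2$; this is a statement about Hom-groups, not merely the isomorphism-detection statement you extract from Proposition~\ref{prop-K-flat-acyclic}. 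It does hold, because the model structures $(All,\class{E}_{\otimes},\class{F}_1)$ and $(\class{KF},\class{E},\class{F}_1)$ of Theorem~\ref{theorem-K-flat-model-derived-cat} have the same trivially cofibrant objects $\class{E}_{\otimes}=\class{KF}\cap\class{E}$, hence the same fibrant replacements into $\class{F}_1$, and both Hom-groups are computed as chain homotopy classes of maps from $K_1$ into such a replacement of $K_2$; but that argument, or an equivalent one, needs to be said.
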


\begin{proof}
From Theorem~\ref{theorem-K-derived-category}, we have the K-flat model, $\mathfrak{M}_2 = (All, \class{KF}, \class{I})$, whose homotopy category is $\class{D}_{\textnormal{K-flat}}(\cat{G})$.  Taking $\class{C}$ to be the class of all chain complexes,  Corollary~\ref{corollary-C-acyclic-models} provides an abelian model structure $\mathfrak{M}_1 = (All, \class{E}_{\otimes}, \class{F}_1)$ on the exact category $\cha{G}_{\otimes}$. By Proposition~\ref{prop-pur-acyc} its homotopy category is precisely $\class{D}_{\otimes\textnormal{-pur}}(\cat{G})$, the $\otimes$-pure derived category of~\cite[Theorem~A]{estrada-gillespie-odabasi}.
Taking $\class{C} = \{S^0(I)\}$, where $I$ is the unit of the monoidal structure, Corollary~\ref{corollary-C-acyclic-models} provides an abelian model structure $\mathfrak{M}_3 = (All, \class{E}, \class{F}_3)$, where $\class{E}$ is the class of all (usual) acyclic chain complexes.  From the description of the fibrant objects given in Theorem~\ref{C-acyclics-cotorsion-pair}, the class $\class{F}_3$ must be the class of all K-injective complexes with $\otimes$-pure injective components. Its homotopy category is $\class{D}(\cat{G})$, the usual derived category of $\cat{G}$. These three abelian model structures are cofibrantly generated and so their homotopy categories are each well generated triangulated categories. The recollement follows at once from Proposition~\ref{prop-K-flat-acyclic} and~\cite[Theorem~4.6]{gillespie-recollement}
\end{proof}

Next, we note that a monoidal model structure for the usual derived category, based on the K-flats, always exists when we have  a set of $\otimes$-flat generators.
In the following statement, we keep the same notation as in the proof of Theorem~\ref{theorem-recollement}.

\begin{theorem}\label{theorem-K-flat-model-derived-cat}
Let $(\cat{G}, \otimes)$ be any closed symmetric monoidal Grothendieck category possessing a set of $\otimes$-flat generators.
Then
$$\mathfrak{M}^{Kflat}_{\textnormal{der}} := (\class{KF}, \class{E}, \class{F}_1)$$
is a Hovey triple relative to the exact category  $\cha{G}_{\otimes}$. It corresponds to a cofibrantly generated model structure for the usual derived category, $\class{D}(\cat{G})$.
The model structure is monoidal and satisfies the monoid axiom as long as the unit $I$ is $\otimes$-flat. Moreover, the model structure $$\mathfrak{M}_{\otimes\textnormal{-pur}} := (All, \class{E}_{\otimes}, \class{F}_1)$$
for $\class{D}_{\otimes\textnormal{-pur}}(\cat{G})$, the $\otimes$-pure derived category, is also monoidal  and satisfies the monoid axiom  (even if the unit $I$ is not $\otimes$-flat).
\end{theorem}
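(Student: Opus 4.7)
The proof breaks into three parts: verifying that $(\class{KF}, \class{E}, \class{F}_1)$ is a Hovey triple on $\cha{G}_{\otimes}$, identifying the homotopy category, and checking the monoidal axioms.

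For the Hovey triple, the plan is to apply the standard recipe that combines two complete cotorsion pairs via a thick class $\class{W}$. The thick class will be $\class{W} = \class{E}$, which is thick in $\cha{G}_{\otimes}$ by Proposition~\ref{prop-C-acyclic-properties} taken with $\class{C} = \{S^0(I)\}$. The two cotorsion pairs are $(\class{E}_{\otimes}, \class{F}_1)$, coming from Corollary~\ref{corollary-C-acyclic-models} with $\class{C} = \cha{G}$, and $(\class{KF}, \class{I})$, coming from Theorem~\ref{theorem-K-derived-category}. Here the $\otimes$-flat generators enter via Proposition~\ref{prop-K-flat-acyclic}, which gives $\class{KF} \cap \class{E} = \class{E}_{\otimes}$. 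The matching $\class{F}_1 \cap \class{E} = \class{I}$ is the main obstacle; one direction follows from Lemma~\ref{lemma-K-flat-perp} together with $\class{E}_{\otimes} \subseteq \class{KF}$ (so $\class{I} = \class{KF}^{\perp} \subseteq \class{E}_{\otimes}^{\perp} = \class{F}_1$). For the reverse, given $Y \in \class{F}_1 \cap \class{E}$, I would apply the special fibrant replacement of the cotorsion pair $(\class{KF}, \class{I})$ to obtain a short exact sequence $0 \to Y \to J \to K \to 0$ in $\cha{G}_{\otimes}$ with $J \in \class{I}$ and $K \in \class{KF}$; thickness of $\class{E}$ then forces $K \in \class{KF} \cap \class{E} = \class{E}_{\otimes}$, so $\Ext^1_{\otimes}(K, Y) = 0$ splits the sequence and exhibits $Y$ as a direct summand of $J \in \class{I}$.

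The weak equivalences of the resulting abelian model structure are the chain maps whose mapping cones lie in $\class{W} = \class{E}$, i.e., the quasi-isomorphisms, so the homotopy category is $\class{D}(\cat{G})$. Cofibrant generation is inherited from both cotorsion pairs being cogenerated by sets via Lemma~\ref{cogen-by-set-complete}. The structure $\mathfrak{M}_{\otimes\textnormal{-pur}}$ is already delivered by Corollary~\ref{corollary-C-acyclic-models} combined with Proposition~\ref{prop-pur-acyc}.

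For the monoidal axioms, Hovey's pushout-product criterion in this abelian setting reduces to the following closure properties: (i) $\class{KF} \otimes \class{KF} \subseteq \class{KF}$, via the isomorphism $E \otimes (X \otimes Y) \cong (E \otimes X) \otimes Y$ and two applications of K-flatness; (ii) $\class{KF} \otimes \class{E}_{\otimes} \subseteq \class{E}_{\otimes}$, by the same rearrangement, now using pure acyclicity of the second factor; and (iii) the Tor-vanishing provided by Lemma~\ref{lemma-degreewise-purity}, since tensoring any admissible short exact sequence of $\cha{G}_{\otimes}$ with an arbitrary complex preserves exactness. The unit axiom would require a cofibrant replacement of $S^0(I)$ whose tensor action preserves weak equivalences; if $I$ is $\otimes$-flat, then $S^0(I)$ is itself K-flat and hence already cofibrant, so the unit axiom is automatic. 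For the monoid axiom, trivial cofibrations are admissible monics in $\cha{G}_{\otimes}$ with pure acyclic cokernels; tensoring with any $Z$ preserves both admissibility (again by Lemma~\ref{lemma-degreewise-purity}) and pure acyclicity of the cokernel (by associativity of $\otimes$ and the definition), and the class of trivial cofibrations is closed under pushouts and transfinite compositions by the standard formalism. Finally, for $\mathfrak{M}_{\otimes\textnormal{-pur}}$ every object is cofibrant, so the unit axiom is automatic with no hypothesis on $I$; the pushout-product and monoid axioms then collapse to the single observation that $\class{E}_{\otimes} \otimes \cha{G} \subseteq \class{E}_{\otimes}$, which is immediate from the definition of pure acyclicity.
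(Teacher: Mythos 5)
Your proposal is correct and follows essentially the same route as the paper: it builds the Hovey triple from the two complete cotorsion pairs $(\class{KF},\class{I})$ and $(\class{E}_{\otimes},\class{F}_1)$ using $\class{KF}\cap\class{E}=\class{E}_{\otimes}$ from Proposition~\ref{prop-K-flat-acyclic} and proves $\class{E}\cap\class{F}_1=\class{I}$ by the same split-embedding argument, then verifies Hovey's Theorem~7.2/7.4 conditions via associativity of $\otimes$, purity of cofibrations (Lemma~\ref{lemma-degreewise-purity}), and K-flatness of $S^0(I)$ when $I$ is $\otimes$-flat. Your write-up simply supplies more detail than the paper's terse version of the identical argument.
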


\begin{proof}
We have $\class{KF}\cap \class{E} = \class{E}_{\otimes}$, the class of all ($\otimes$-)pure acyclic complexes, by Proposition~\ref{prop-K-flat-acyclic}. It follows from this that we also have $\class{E} \cap \class{F}_1 = \class{I}$, the right orthogonal to $\class{KF}$.   Indeed $\class{I}\subseteq \class{E} \cap \class{F}_1$ by Lemma~\ref{lemma-K-flat-perp}. To show $\class{E} \cap \class{F}_1\subseteq \class{I}$, embed a complex $X\in \class{E} \cap \class{F}_1$ as a $\otimes$-pure subobject of a complex in $\class{I}$ with K-flat cokernel, and note that the embedding must be split exact.

It is easy to check that the four monoidal conditions of~\cite[Theorem~7.2]{hovey} hold for both $\mathfrak{M}^{Kflat}_{\textnormal{der}}$ and $\mathfrak{M}_{\otimes\textnormal{-pur}}$.   In particular, all cofibrations are $\otimes$-pure by Lemma~\ref{lemma-degreewise-purity}. And $S^0(I)$, the unit for the monoidal structure on $\cha{G}$, is K-flat whenever $I$ is $\otimes$-flat. By associativity of $\otimes$, the class $\class{KF}$ is closed under $\otimes$ and $\class{E}_{\otimes}$ is a $\otimes$-ideal. The monoid axiom is verified by the two conditions given in~\cite[Theorem~7.4]{hovey}. In particular, pure acyclic complexes are closed under transfinite compositions of admissible monics in $\cha{G}_{\otimes}$.
\end{proof}

\section{The right orthogonal to K-flats}\label{sec-right-orthogonal}

For modules over a ring $R$, Emmanouil shows in~\cite{emmanouil-K-flatness-and-orthogonality-in-homotopy-cats} that $\rightperp{\class{KF}}$, in $K(R)$, is  the class of acyclic complexes of pure injective $R$-modules. Using the language of cotorsion pairs and Ext-orthogonals in $\ch_{\otimes}$ this is described in~\cite[Them~4.4]{gillespie-K-flat}. In this section we would like to find conditions that allow us to generalize this result. In particular, we would like this to be true for complexes of quasi-coherent sheaves. We prove here that this is indeed the case whenever the underlying scheme is quasi-compact and semiseparated. We first prove some general results that appear to be useful in other categories, and for other cotorsion pairs.

\subsection{General Results}

Here we prove some general lemmas, and Theorem~\ref{theorem-direct-limits}, which can be useful to characterize the right Ext-orthogonal of a class of chain complexes.

\begin{lemma}\label{lemma-direct-limits-cot-pair}
Let $\cat{G}$ be a Grothendieck category, considered along with a proper class $\class{E}$ of short exact sequences that is closed under direct limits.
Assume $\class{F}$ is a class of objects that is closed under direct summands, $\class{E}$-extensions, and direct limits. Let $\class{E}_{\class{F}}$ denote the inherited Quillen exact structure on $\class{F}$. If $(\class{X}, \class{Y})$ is a cotorsion pair on the exact category $(\class{F},\class{E}_{\class{F}})$, and $\class{X}$ is thick relative to $\class{E}_{\class{F}}$, then  $\class{X}$ is closed under direct limits.
\end{lemma}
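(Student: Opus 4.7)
The plan is to realize $X := \colim_i X_i$ as the cokernel of an admissible epimorphism in $\class{E}_{\class{F}}$ whose kernel and domain both lie in $\class{X}$, and then invoke thickness to conclude $X \in \class{X}$. First, $X \in \class{F}$ by the closure assumption on $\class{F}$. Moreover, $\bigoplus_i X_i \in \class{X}$ --- the left class of any cotorsion pair is closed under direct sums, since $\Ext^1(-,Y)$ converts direct sums to products --- and $\bigoplus_i X_i \in \class{F}$: finite direct sums sit inside $\class{F}$ as split $\class{E}$-extensions, and any direct sum is the direct limit of its finite subsums. The canonical surjection $\pi : \bigoplus_i X_i \twoheadrightarrow X$ fits into a short exact sequence $0 \to K \to \bigoplus_i X_i \xrightarrow{\pi} X \to 0$ in $\cat{G}$; this sequence lies in $\class{E}_{\class{F}}$ because $\pi$ is the direct limit, over the finite directed subdiagrams $F \subseteq I$, of the split epimorphisms $\bigoplus_{i \in F} X_i \twoheadrightarrow X_{\max F}$, and $\class{E}$ is closed under direct limits (and contains split sequences, as any proper class does).

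The heart of the matter is then to show $K \in \class{X}$. For the representative case of a countable chain $X_0 \to X_1 \to \cdots$, the classical mapping telescope
\[
0 \to \bigoplus_n X_n \xrightarrow{1-\sigma} \bigoplus_n X_n \to X \to 0
\]
identifies $K$ outright as a direct sum of objects of $\class{X}$, and the telescope itself is in $\class{E}_{\class{F}}$ as the direct limit of its split finite-stage truncations $0 \to \bigoplus_{n \le N} X_n \to \bigoplus_{n \le N+1} X_n \to X_{N+1} \to 0$. For a general directed system, one reduces to the chain case by transfinite induction along a cofinal well-ordered chain: at each successor step the thickness of $\class{X}$ handles the incremental extension, while at limit ordinals the argument rests on an Eklof-style closure of the left class of a cotorsion pair under $\class{E}_{\class{F}}$-transfinite extensions --- a consequence of $\class{E}$ being closed under direct limits combined with the cotorsion-pair hypothesis.

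Once $K,\bigoplus_i X_i \in \class{X}$ and the sequence $0 \to K \to \bigoplus_i X_i \to X \to 0$ lies in $\class{E}_{\class{F}}$, the 2-out-of-3 property provided by thickness of $\class{X}$ delivers $X \in \class{X}$. The main obstacle is controlling $K$ for non-chain systems: the transfinite induction requires care at limit ordinals, where one must verify that the relevant filtered colimits of admissible monics remain in $\class{E}$, and that thickness together with the transfinite-extension argument genuinely places the limit in $\class{X}$. The $\omega$-chain telescope case is entirely clean and illustrates why the strategy succeeds in general.
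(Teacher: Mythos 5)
Your overall skeleton agrees with the paper's: both analyze the sequence $0 \to K \to \bigoplus_{i} X_i \to \varinjlim_i X_i \to 0$, check it lies in $\class{E}_{\class{F}}$, show $K \in \class{X}$, and finish by thickness. The preliminary steps (closure of $\class{X}$ under direct sums, $\bigoplus_i X_i \in \class{F}$, the sequence being a direct limit of split sequences) are fine, as is the $\omega$-chain telescope.

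The genuine gap is the general case of $K \in \class{X}$. First, ``reduce to the chain case by transfinite induction along a cofinal well-ordered chain'' fails as stated: a directed set need not admit a cofinal well-ordered chain (e.g.\ the finite subsets of an uncountable set, where every chain is countable and hence not cofinal), and even for a transfinite well-ordered chain the telescope map $1-\sigma$ has no analogue at limit ordinals, so the identification $K \cong \bigoplus X_\alpha$ is special to $\omega$-indexed systems. Second, and more importantly, you never identify the structural fact that makes the induction run: $K$ is a directed union of subobjects $K_J$, each of which is a \emph{direct summand} of $\bigoplus_i X_i$ (hence lies in $\class{X}$ and is admissibly embedded). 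The paper isolates exactly the statement needed --- $\class{X}$ is closed under direct unions of $\class{E}_{\class{F}}$-admissible subobjects --- and proves it by induction on the cardinality of the index set, rewriting the union as a well-ordered \emph{continuous} chain of sub-unions over smaller directed subsets (the Ad\'amek--Rosick\'y device), then applying thickness at successors to get the quotients $X_{I_{k+1}}/X_{I_k} \in \class{X}$ and Eklof's lemma to the resulting transfinite extension. This induction is on subobjects of a fixed object with monic, admissible transition maps; it cannot be run on an arbitrary directed system, where the transition maps need not be monomorphisms. Your proposal gestures at the right ingredients (thickness at successors, Eklof at limits) but without the $K_J$ decomposition and the cardinality induction on admissible direct unions, the argument does not close.
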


Our proof is merely a verification that the one in~\cite[Prop.~3.1/3.2]{gillespie-ding-modules} generalizes to such a cotorsion pair $(\class{X},\class{Y})$. Alternatively, the result follows from a variation of this type of argument which is clearly written down in Positselski and \v{S}t'ov\'{\i}\v{c}ek \cite[Prop.~5.1]{posit-stov-flat-sheaves}.

\begin{proof}
The argument from~\cite[Prop.~3.1/3.2]{gillespie-ding-modules} generalizes to this setting.  Indeed the proof there analyzes a short exact sequence
\begin{equation}\label{eq-ses}
0 \xrightarrow{} K \xrightarrow{\subseteq} \bigoplus_{i<\lambda} X_i \xrightarrow{}  \varinjlim_{i<\lambda} X_i \xrightarrow{}  0
\end{equation}
and $K$ is shown to be a directed union of subobjects $K_J$, each of which is (isomorphic to) a direct summand of $\bigoplus_{i<\lambda} X_i \in \class{X}$. In particular, each inclusion $K_J \subseteq \bigoplus_{i<\lambda} X_i$  is an admissible $\class{E}_{\class{F}}$-monic since it is a split monic.  We would like to verify that the reduction in the proof that expresses $K = \bigcup_{J\in\class{S}} K_J$ as a direct union of a well-ordered continuous  (smooth) chain  (via~\cite[Lemma1.6/ Corollary1.7/ Remark]{AR}), allows us to still conclude $K \in \class{X}$.  This will follow if we are able to verify the
\begin{equation}
\noindent\emph{Claim: $\class{X}$ is closed under direct unions of $\class{E}_{\class{F}}$-admissible subobjects.}
\end{equation}
So let $\{X_i\subseteq X\}_{i\in I}$ be a direct system of subobjects, indexed by some directed set $(I,\leq)$, with each $X_i\in \class{X}$. We proceed by transfinite induction on the cardinality $\gamma = |I|$ of the directed indexing set $(I,\leq)$, to show that the direct union $K := \bigcup_{i\in I} X_i$ must be in the class $\class{X}$.

First, we note that if $\gamma$ is finite, then the direct union $K := \bigcup_{i\in I} X_i$ must coincide with the particular $X_i$ representing the unique maximal element of $I$. So clearly $K = X_i\in\class{X}$ in this case.

For the induction step, let us be given such a direct union $K = \bigcup_{i\in I} X_i$ with $\gamma = |I|$ an infinite cardinal, and suppose that all such direct unions are in $\class{X}$ whenever they are indexed by a directed set of cardinality less than $\gamma$. Following the proof of~\cite[Corollary1.7]{AR}, we may re-express $K$ as a well-ordered continuous direct union, $K = \bigcup_{k<\gamma} X_{I_k}$,  where $\gamma = |I|$ and, by construction, each $X_{I_k} = \bigcup_{i\in I_k} X_{i}$ is itself a direct union of some of the subojects $X_i$, over a directed subset $I_k \subseteq I$ of \emph{smaller} cardinality $|I_k|<\gamma$. By the induction hypothesis, each $X_{I_k} \in \class{X}$. Moreover, the inclusion $X_{I_k}  \subseteq X$ must also be an admissible monic for $\class{E}_{\class{F}}$, since each inclusion $X_i \subseteq X$ is such and by our other assumptions which imply $\class{E}_{\class{F}}$ is closed under direct limits.  It then  follows from~\cite[Prop.~7.6 (dual)]{buhler-exact categories} that each inclusion $X_{I_k} \subseteq X_{I_{k +1}}$ is an admissible monic in $\class{E}_{\class{F}}$.  Since $\class{X}$ is thick relative to $\class{E}_{\class{F}}$, we get that $X_{I_{k +1}}/X_{I_k} \in \class{X}$ for all $k<\gamma$. In other words, $K = \bigcup_{k<\gamma} X_{I_k}$ is a transfinite $\gamma$-extension of all the objects $X_{I_{k +1}}/X_{I_k} \in \class{X}$. So it follows from Eklof's Lemma (the version in~\cite[Prop.~2.12]{saorin-stovicek} applies), that $K\in\class{X}$.

This completes the proof of the above \emph{Claim}. Turning back to the short exact sequence in (\ref{eq-ses}), we have shown $K\in\class{X}$. Moreover, this sequence must be in $\class{E}_{\class{F}}$ by our  assumptions on direct limits. So, since $\class{X}$ is thick in $\class{E}_{\class{F}}$, it follows  that  $\varinjlim_{i<\lambda} X_i \in \class{X}$.
\end{proof}

Given an exact category $(\class{A},\class{E})$, let $\cha{A}_{\class{E}}$ denote the exact category of chain complexes along with the inherited degreewise exact structure.  Let us say that a cotorsion pair in $\cha{A}_{\class{E}}$ is \emph{suspension closed} if each class (equivalently, either class) is closed under $\Sigma$ and $\Sigma^{-1}$. Given a class of objects $\class{X}\subseteq \cat{A}$, we will use the notation $\dwclass{X}$ to denote the class of all chain complexes that are degreewise in $\class{X}$.

\begin{lemma}\label{lemma-restricted-cot-pair}
Let $(\class{X},\class{Y})$ be a complete hereditary cotorsion pair on an exact category $(\cat{A},\class{E})$. Assume  $(\hat{\class{X}},\hat{\class{Y}})$ is a suspension closed complete cotorsion pair in $\cha{A}_{\class{E}}$ such that  every complex of $\hat{\class{X}}$ has components in $\class{X}$ and every complex of $\hat{\class{Y}}$ has components in $\class{Y}$. Then $(\hat{\class{X}},\hat{\class{Y}})$ restricts to a complete cotorsion pair $(\hat{\class{X}},\hat{\class{Y}}\cap\dwclass{X})$ in the exact category $\cha{X}_{\class{E}_{\class{X}}}$, and moreover $\hat{\class{X}}$ is thick in $\cha{X}_{\class{E}_{\class{X}}}$.
\end{lemma}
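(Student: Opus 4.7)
The plan is to transfer the cotorsion pair $(\hat{\class{X}},\hat{\class{Y}})$ from $\cha{A}_{\class{E}}$ down to $\cha{X}_{\class{E}_{\class{X}}}$, exploiting that $\class{X}$ is thick in $(\cat{A},\class{E})$ by heredity of $(\class{X},\class{Y})$. A degreewise argument shows $\cha{X}$ is correspondingly thick in $\cha{A}_{\class{E}}$: any $\class{E}$-admissible short exact sequence of chain complexes whose outer terms belong to $\cha{X}$ must have its middle term in $\cha{X}$ as well. Consequently, Yoneda Ext groups agree, $\Ext^1_{\cha{X}_{\class{E}_{\class{X}}}}(A,B)=\Ext^1_{\cha{A}_{\class{E}}}(A,B)$, for all $A,B\in\cha{X}$.

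This identification yields orthogonality of $(\hat{\class{X}},\hat{\class{Y}}\cap\dwclass{X})$ in $\cha{X}_{\class{E}_{\class{X}}}$ for free. I would then build approximations as follows: given $M\in\cha{X}$, completeness of $(\hat{\class{X}},\hat{\class{Y}})$ in $\cha{A}_{\class{E}}$ supplies a special precover $0\to Y\to X\to M\to 0$ with $X\in\hat{\class{X}}$ and $Y\in\hat{\class{Y}}$; since $X,M\in\dwclass{X}$ and $\class{X}$ is thick, $Y\in\dwclass{X}$, so $Y\in\hat{\class{Y}}\cap\dwclass{X}$ and the sequence is the required special precover in $\cha{X}_{\class{E}_{\class{X}}}$. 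Special preenvelopes are handled symmetrically. A Salce-type argument then confirms the cotorsion pair: any $M\in\cha{X}$ in $\leftperp{(\hat{\class{Y}}\cap\dwclass{X})}$ is a retract of its $\hat{\class{X}}$-precover, hence in $\hat{\class{X}}$; and any $M\in\cha{X}$ in $\rightperp{\hat{\class{X}}}$ lies in $\hat{\class{Y}}$ because any $\cha{A}$-extension $0\to M\to E\to X\to 0$ with $X\in\hat{\class{X}}$ has $E\in\cha{X}$ by thickness, so the sequence lives in $\cha{X}_{\class{E}_{\class{X}}}$ and splits by hypothesis.

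The last task is thickness of $\hat{\class{X}}$ in $\cha{X}_{\class{E}_{\class{X}}}$, meaning closure under retracts and two-out-of-three in $\class{E}_{\class{X}}$-admissible short exact sequences. Closure under retracts and under extensions is automatic from being the left half of a cotorsion pair. Via the Yoneda Ext long exact sequence, the two remaining two-out-of-three cases reduce to showing $\Ext^2_{\cha{A}_{\class{E}}}(X,Y)=0$ for $X\in\hat{\class{X}}$ and $Y\in\hat{\class{Y}}$; that is, heredity of $(\hat{\class{X}},\hat{\class{Y}})$ in $\cha{A}_{\class{E}}$. This is where the suspension-closure hypothesis enters decisively: combined with completeness, $\Sigma^{\pm1}$-closure of both classes allows dimension shifting against special (pre)envelopes that remain inside $\hat{\class{X}}$ or $\hat{\class{Y}}$ under suspension, upgrading $\Ext^1$-vanishing to $\Ext^n$-vanishing in all degrees. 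The main obstacle is executing this heredity step cleanly from completeness plus $\Sigma^{\pm1}$-closure alone; once done, the thickness in $\cha{X}_{\class{E}_{\class{X}}}$ descends from thickness of $\hat{\class{X}}$ in $\cha{A}_{\class{E}}$ via the Ext identification above.
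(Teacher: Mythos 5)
Your first half --- restricting $(\hat{\class{X}},\hat{\class{Y}})$ to a complete cotorsion pair on $\cha{X}_{\class{E}_{\class{X}}}$ --- is essentially the paper's argument and is fine: take a special approximation in $\cha{A}_{\class{E}}$, use heredity of $(\class{X},\class{Y})$ (for precovers) or extension-closure of $\class{X}$ (for preenvelopes) to see that the third term lies in $\dwclass{X}$, then conclude by splitting. One caveat on wording: $\class{X}$ is \emph{not} thick in $(\cat{A},\class{E})$ --- left halves of hereditary cotorsion pairs are resolving and extension-closed, but not closed under cokernels of admissible monics (think of the flat cotorsion pair). Every place you invoke ``thickness'' of $\class{X}$ you in fact only use extension-closure or kernel-closure, so this part survives, but the terminology should be fixed.

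The thickness of $\hat{\class{X}}$ is where there is a genuine gap. You reduce ``the two remaining two-out-of-three cases'' to $\Ext^2(X,Y)=0$ for $X\in\hat{\class{X}}$, $Y\in\hat{\class{Y}}$. That reduction works for the kernel case, but not for the cokernel case: given $0\to A\to B\to C\to 0$ with $A,B\in\hat{\class{X}}$, the long exact sequence reads
$$\Hom(B,Y)\to\Hom(A,Y)\to\Ext^1(C,Y)\to\Ext^1(B,Y)=0,$$
so $\Ext^1(C,Y)=0$ is equivalent to every chain map $A\to Y$ extending along $A\to B$ --- a lifting statement that no amount of higher Ext-vanishing supplies (and it had better not follow from heredity alone, by the flat example above). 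The heredity of $(\hat{\class{X}},\hat{\class{Y}})$ itself, which you flag as ``the main obstacle,'' is also left unexecuted. The paper sidesteps both issues by exploiting suspension closure differently: since $\hat{\class{X}}\subseteq\dwclass{X}$ and $\hat{\class{Y}}\subseteq\dwclass{Y}$ are degreewise $\Ext^1_{\class{E}}$-orthogonal, every admissible extension of $Y\in\hat{\class{Y}}$ by a complex in $\cha{X}$ is degreewise split, so $\Ext^1_{\cha{A}_{\class{E}}}(X,\Sigma^nY)\cong\Ext^1_{dw}(X,\Sigma^nY)$ computes the homology of the total complex $\homcomplex(X,Y)$; suspension closure of $\hat{\class{Y}}$ then gives the characterization that $X\in\hat{\class{X}}$ if and only if $\homcomplex(X,Y)$ is acyclic for all $Y\in\hat{\class{Y}}$. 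Applying $\homcomplex(-,Y)$ to a short exact sequence in $\cha{X}_{\class{E}_{\class{X}}}$ yields a short exact sequence of complexes of abelian groups (the needed surjectivity is exactly the degreewise orthogonality $\Ext^1_{\class{E}}(X''_n,Y_m)=0$), and two-out-of-three for acyclicity delivers both directions of thickness at once. You should replace the $\Ext^2$ reduction with this Hom-complex argument.
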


\begin{proof}
The key step is to show that $\leftperp{[\hat{\class{Y}}\cap\dwclass{X}]}$, taken in $\cha{X}_{\class{E}_{\class{X}}}$, is contained within $\hat{\class{X}}$. So let $X \in \cha{X}$ be in $\leftperp{[\hat{\class{Y}}\cap\dwclass{X}]}$.  Using that $(\hat{\class{X}},\hat{\class{Y}})$ is a complete cotorsion pair, we may write a short exact sequence $0 \xrightarrow{} Y \xrightarrow{}  X' \xrightarrow{}  X \xrightarrow{}  0$ where $Y \in \hat{\class{Y}}$ and $X' \in \hat{\class{X}}$. Since all $X_n,X'_n \in \class{X}$ and  $(\class{X},\class{Y})$ is  hereditary, we get $Y_n \in \class{X}$. So $Y\in  \hat{\class{Y}}\cap\dwclass{X}$ and the sequence splits and we conclude $X \in \hat{\class{X}}$. Since $(\hat{\class{X}},\hat{\class{Y}})$ is suspension closed and degreewise $\Ext^1_{\class{E}}$-orthogonal, we see that $X \in \hat{\class{X}}$ if and only if $\homcomplex(X,Y)$ is acyclic for all $Y \in \hat{\class{Y}}$. Moreover, given a short exact sequence $0 \xrightarrow{} X \xrightarrow{}  X' \xrightarrow{}  X'' \xrightarrow{}  0$ in $\cha{X}_{\class{E}_{\class{X}}}$, then
$$0 \xrightarrow{} \homcomplex(X,Y) \xrightarrow{}  \homcomplex(X',Y) \xrightarrow{}  \homcomplex(X'',Y) \xrightarrow{}  0$$ is a short exact sequence of chain complexes of abelian groups for all  $Y \in \hat{\class{Y}}$. So  $\hat{\class{X}}$ has the 2 out of 3 property relative to the exact category  $\cha{X}_{\class{E}_{\class{X}}}$.
\end{proof}

The next theorem will also use the following notation: (i) for a given class of objects $\class{D}$ in an additive  category with direct limits, we let $\varinjlim \class{D}$ denote the class of all objects that equal a direct limit of some directed system of objects in $\class{D}$, and (ii) assuming we are in an exact category with $\class{E}$ denoting the class of all short exact sequences, we let $\langle \class{D}\rangle_{\class{E}}$, or just $\langle \class{D}\rangle$ if the context is clear, denote the class of all objects $X$ possessing either a finite resolution, or a finite coresolutions, by objects in $\class{D}$.

\begin{theorem}\label{theorem-direct-limits}
Let $\cat{G}$ be a Grothendieck category, considered along with a proper class $\class{E}$ of short exact sequences that is closed under direct limits.
Suppose $(\class{X},\class{Y})$ is a complete hereditary cotorsion pair on $(\cat{G},\class{E})$, cogenerated by a (generating) set, and that $\class{X}$ is closed under direct limits in $\cat{G}$.  Then:
\begin{enumerate}
\item  $(\leftperp{\dwclass{Y}},\dwclass{Y})$ is a complete hereditary cotorsion pair in the exact category $\cha{G}_{\class{E}}$. The class  $\leftperp{\dwclass{Y}}$  is closed under direct limits, and it is thick relative to the exact subcategory $\cha{X}_{\class{E_{\class{X}}}}$.
\item Let $\class{T}$ be the class of all contractible complexes with components in $\class{X}$. Then for any complex $X \in \langle\varinjlim \class{T}\rangle_{\cha{X}_{\class{E_{\class{X}}}}}$, we have  $$\Ext^1_{\cha{G}_{\class{E}}}(X,Y) = \Ext^1_{dw}(X,Y) = 0$$ for all  $Y \in \dwclass{Y}$. Equivalently, any chain map $f : X \xrightarrow{} Y$  is null homotopic whenever $X \in \langle\varinjlim \class{T}\rangle_{\cha{X}_{\class{E_{\class{X}}}}}$ and $Y \in \dwclass{Y}$.
\end{enumerate}
\end{theorem}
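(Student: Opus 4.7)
The plan for part~(1) is to lift the cogenerating set from $\cat{G}$ to $\cha{G}$. Let $\class{S}$ be a set cogenerating $(\class{X},\class{Y})$ in $(\cat{G},\class{E})$, and set $\class{S}_0 = \{D^n(S) : S \in \class{S},\, n \in \Z\}$. The standard adjunction $\Ext^1_{\cha{G}_{\class{E}}}(D^n(A),Y) \cong \Ext^1_{\class{E}}(A,Y_n)$ identifies $\rightperp{\class{S}_0} = \dwclass{Y}$, so $\class{S}_0$ cogenerates a cotorsion pair $(\leftperp\dwclass{Y},\dwclass{Y})$ in $\cha{G}_{\class{E}}$. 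Completeness follows because $\cha{G}_{\class{E}}$ inherits efficiency from $(\cat{G},\class{E})$, so the Saor\'{\i}n--\v{S}t'ov\'{\i}\v{c}ek machinery applies, and heredity is immediate from the degreewise definition of $\dwclass{Y}$ combined with heredity of $(\class{X},\class{Y})$. The cotorsion pair is also suspension closed, since $\dwclass{Y}$ is stable under shifts.

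To show the components of objects in $\leftperp\dwclass{Y}$ lie in $\class{X}$, use the same adjunction against $D^n(Y)$ for $Y \in \class{Y}$. The hypotheses of Lemma~\ref{lemma-restricted-cot-pair} are then satisfied, yielding a restricted complete cotorsion pair in $\cha{X}_{\class{E}_{\class{X}}}$ in which $\leftperp\dwclass{Y}$ is thick. To conclude closure under direct limits, I would invoke Lemma~\ref{lemma-direct-limits-cot-pair} with the ambient Grothendieck category taken to be $\cha{G}$ and the distinguished subclass taken to be $\cha{X}$; verifying that $\cha{X}$ is closed in $\cha{G}$ under direct summands, $\cha{G}_{\class{E}}$-extensions, and direct limits reduces degreewise to the corresponding closure properties of $\class{X}$ (the last using the hypothesis that $\class{X}$ is closed under direct limits in $\cat{G}$).

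For part~(2), the plan is a three-step bootstrap and then a translation step. First, any contractible complex $T \in \class{T}$ decomposes as a direct sum $\bigoplus_n D^n(M_n)$ in which each $M_n$ is a direct summand of some component of $T$, hence lies in $\class{X}$; the adjunction then gives $\Ext^1_{\cha{G}_{\class{E}}}(T,Y) \cong \prod_n \Ext^1_{\class{E}}(M_n, Y_n) = 0$ for all $Y \in \dwclass{Y}$, so $\class{T} \subseteq \leftperp\dwclass{Y}$. Second, closure of $\leftperp\dwclass{Y}$ under direct limits from part~(1) extends this to $\varinjlim \class{T} \subseteq \leftperp\dwclass{Y}$. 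Third, thickness of $\leftperp\dwclass{Y}$ in $\cha{X}_{\class{E}_{\class{X}}}$ propagates the containment along finite resolutions and coresolutions, giving $\langle \varinjlim\class{T}\rangle_{\cha{X}_{\class{E}_{\class{X}}}} \subseteq \leftperp\dwclass{Y}$. For the null-homotopy equivalence, observe that since $X$ has components in $\class{X}$ and $Y$ has components in $\class{Y}$, every short exact sequence $0 \to Y \to Z \to X \to 0$ in $\cha{G}_{\class{E}}$ is automatically degreewise split (by $\Ext^1_{\class{E}}(X_n,Y_n) = 0$); the standard twisted-complex identification then expresses such extensions up to equivalence as chain homotopy classes of maps $X \to \Sigma^{-1} Y$, with splitting equivalent to null-homotopy. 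Since $\dwclass{Y}$ is suspension closed, this yields the stated equivalence.

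The main technical subtlety I anticipate is orchestrating the two lemmas for the direct-limit closure: Lemma~\ref{lemma-restricted-cot-pair} must first pass the dw cotorsion pair and its thickness to $\cha{X}_{\class{E}_{\class{X}}}$, after which Lemma~\ref{lemma-direct-limits-cot-pair} can be applied with $\cha{X}$ as a subclass of the Grothendieck category $\cha{G}$ rather than attempting to work internally in $\class{X}$, which is not assumed to be Grothendieck.
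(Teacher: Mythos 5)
Your proposal is correct and follows essentially the same route as the paper's proof: cogenerate $(\leftperp{\dwclass{Y}},\dwclass{Y})$ by the disks on a cogenerating set, restrict to $\cha{X}_{\class{E}_{\class{X}}}$ via Lemma~\ref{lemma-restricted-cot-pair}, deduce closure under direct limits from Lemma~\ref{lemma-direct-limits-cot-pair} applied inside $\cha{G}_{\class{E}}$, and then run the containments $\class{T}\subseteq\varinjlim\class{T}\subseteq\langle\varinjlim\class{T}\rangle\subseteq\leftperp{\dwclass{Y}}$. You actually supply several verifications the paper leaves implicit (that $\leftperp{\dwclass{Y}}$ has components in $\class{X}$, that $\class{T}\subseteq\leftperp{\dwclass{Y}}$ via the disk decomposition, and the degreewise-split/null-homotopy translation), all of which are sound.
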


\begin{proof}
Taking the set of all disks $\{D^n(S)\}$ as $S$ ranges through a  cogenerating set for $(\class{X},\class{Y})$, then it cogenerates a complete hereditary cotorsion pair, $(\leftperp{\dwclass{Y}},\dwclass{Y})$, in $\cha{G}_{\class{E}}$. It satisfies the hypotheses of Lemma~\ref{lemma-restricted-cot-pair}, and so it restricts to a (complete) cotorsion pair,  $$(\leftperp{\dwclass{Y}}, \dwclass{Y}\cap\dwclass{X}),$$ on the exact subcategory $\cha{X}_{\class{E_{\class{X}}}}$, and $\leftperp{\dwclass{Y}}$ must be thick in $\cha{X}_{\class{E_{\class{X}}}}$. Moreover, by our assumptions, $\cha{X}_{\class{E_{\class{X}}}}$ is closed under direct limits. So referring to Lemma~\ref{lemma-direct-limits-cot-pair} we may take $(\cat{G},\class{E})$ to be $\cha{G}_{\class{E}}$, take $(\class{F},\class{E}_{\class{F}})$ to be $\cha{X}_{\class{E_{\class{X}}}}$, and take the cotorsion pair $(\class{X}, \class{Y})$ to be $(\leftperp{\dwclass{Y}}, \dwclass{Y}\cap\dwclass{X})$. We conclude that $\leftperp{\dwclass{Y}}$ is closed under direct limits, proving (1).

Now let $\class{T}$ be the class of all contractible complexes with components in $\class{X}$. Then certainly $\class{T} \subseteq \leftperp{\dwclass{Y}}$, so $\varinjlim \class{T} \subseteq \leftperp{\dwclass{Y}}$ by what we just proved.   And since $\leftperp{\dwclass{Y}}$ is thick in $\cha{X}_{\class{E_{\class{X}}}}$, we get that $\langle\varinjlim \class{T}\rangle_{\cha{X}_{\class{E_{\class{X}}}}} \subseteq \leftperp{\dwclass{Y}}$. It means   $\Ext^1_{\cha{G}_{\class{E}}}(X,Y) = 0$   for any complex $X \in \langle\varinjlim \class{T}\rangle_{\cha{X}_{\class{E_{\class{X}}}}}$,  and $Y \in \dwclass{Y}$.
\end{proof}

To show the power of the above theorem we show how it may be used to recover an important result of  Bazzoni, Cort\'es-Izurdiaga, and  Estrada from~\cite[Theorem~5.3]{bazzoni-cortes-estrada}.

\begin{example}[Theorem~5.3 of~\cite{bazzoni-cortes-estrada}]\label{example-flat-cot-pair}
Let $R$ be a ring. Take $(\cat{G},\class{E})$ to be the category of $R$-modules along with the usual abelian exact structure. Take $(\class{X},\class{Y}) = (\class{F},\class{C})$ to be Enochs' flat cotorsion pair. It is known that an acyclic complex with flat cycles is a direct limit of contractible complexes with projective components. Thus $\tilclass{F} = \varinjlim \class{T}$. It follows easily that $\tilclass{F} = \langle\varinjlim \class{T}\rangle_{\cha{F}_{\class{E}_{\class{F}}}}$. Therefore, $(\tilclass{F},\dgclass{C}) = (\tilclass{F},\dwclass{C})$.
\end{example}

We can also use Theorem~\ref{theorem-direct-limits} to recover the result due to Christensen, Estrada, and Thompson from~\cite[Theorem~3.3]{cet-G-flat-stable-scheme}, which is the extention of~\cite[Theorem~5.3]{bazzoni-cortes-estrada} to quasi-coherent sheaves. See Remark~\ref{remark-CET}.
As another example, we have the following motivating result.

\begin{corollary}\label{corollary-direct-limits}
Let $(\cat{G}, \otimes)$ be any closed symmetric monoidal Grothendieck category and $\cha{G}_{\otimes}$ the associated chain complex category along with the degreewise $\otimes$-pure exact structure. Let $\class{PI}_{\otimes}$ denote the class of all $\otimes$-pure injective objects, $\class{E}_{\otimes}$ the class of all ($\otimes$-)pure acyclic complexes, and let $\class{T}$ denote the class of all contractible chain complexes. If  $\class{E}_{\otimes} \subseteq \langle\varinjlim\class{T}\rangle_{\cha{G}_\otimes}$, then $(\class{E}_{\otimes},\dwclass{PI}_{\otimes})$ 
is a complete cotorsion pair in $\cha{G}_{\otimes}$.
\end{corollary}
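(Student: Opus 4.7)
The plan is to combine the complete cotorsion pair already produced by Theorem~\ref{C-acyclics-cotorsion-pair} with an $\Ext$-vanishing statement obtained from Theorem~\ref{theorem-direct-limits}. The hypothesis $\class{E}_{\otimes} \subseteq \langle\varinjlim\class{T}\rangle_{\cha{G}_\otimes}$ is exactly the ingredient needed to identify the right-hand class of the former with $\dwclass{PI}_{\otimes}$.

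First, I would apply Theorem~\ref{theorem-direct-limits} with the ambient Grothendieck category $\cat{G}$, the proper class $\class{E} := \mathcal{P}_{\otimes}$ of $\otimes$-pure short exact sequences, and the complete hereditary cotorsion pair $(\class{X},\class{Y}) = (\cat{G}, \class{PI}_{\otimes})$. This cotorsion pair is cogenerated by a set by~\cite[Lemma~3.6 and Prop.~3.7]{estrada-gillespie-odabasi}, is hereditary trivially because $\class{X} = \cat{G}$, and obviously has $\class{X}$ closed under direct limits. Moreover $\mathcal{P}_{\otimes}$ is itself closed under direct limits because each $M \otimes -$ preserves colimits and direct limits are exact in $\cat{G}$. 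With these choices one has $\cha{X}_{\class{E}_{\class{X}}} = \cha{G}_{\otimes}$, and the class $\class{T}$ appearing in Theorem~\ref{theorem-direct-limits} is precisely the class of all contractible complexes as in the corollary. Conclusion~(2) of that theorem therefore yields
$$\langle\varinjlim\class{T}\rangle_{\cha{G}_{\otimes}}\subseteq\leftperp{\dwclass{PI}_{\otimes}}.$$
Combined with the standing hypothesis $\class{E}_{\otimes}\subseteq\langle\varinjlim\class{T}\rangle_{\cha{G}_{\otimes}}$, this gives $\class{E}_{\otimes}\subseteq\leftperp{\dwclass{PI}_{\otimes}}$; equivalently, $\dwclass{PI}_{\otimes}\subseteq\rightperp{\class{E}_{\otimes}}$.

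Second, I invoke Theorem~\ref{C-acyclics-cotorsion-pair} with $\class{C}$ taken to be the class of all chain complexes. This produces a complete cotorsion pair $(\class{E}_{\otimes}, \class{F})$ in $\cha{G}_{\otimes}$ with $\class{F} = \rightperp{\class{E}_{\otimes}}$. Part~(2) of that theorem tells us that every $F\in\class{F}$ has $\otimes$-pure injective components, so $\class{F}\subseteq\dwclass{PI}_{\otimes}$. Chaining the two inclusions,
$$\class{F}\subseteq\dwclass{PI}_{\otimes}\subseteq\rightperp{\class{E}_{\otimes}}=\class{F},$$
forces $\class{F}=\dwclass{PI}_{\otimes}$, and the complete cotorsion pair produced by Theorem~\ref{C-acyclics-cotorsion-pair} is precisely $(\class{E}_{\otimes},\dwclass{PI}_{\otimes})$.

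Since essentially all the substantive work is packaged inside Theorem~\ref{theorem-direct-limits} and Theorem~\ref{C-acyclics-cotorsion-pair}, I do not anticipate any genuine obstacle. The only item that might warrant brief explicit verification is closure of $\mathcal{P}_{\otimes}$ under direct limits, needed to apply Theorem~\ref{theorem-direct-limits}; this follows immediately from the Grothendieck axioms together with the fact that $M\otimes -$ preserves colimits for every $M\in\cat{G}$.
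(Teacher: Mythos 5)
Your proposal is correct and follows essentially the same route as the paper: both apply Theorem~\ref{theorem-direct-limits} to the cotorsion pair $(\cat{G},\class{PI}_{\otimes})$ in $(\cat{G},\mathcal{P}_{\otimes})$ to get $\langle\varinjlim\class{T}\rangle_{\cha{G}_{\otimes}}\subseteq\leftperp{\dwclass{PI}_{\otimes}}$ and then match this against the complete cotorsion pair $(\class{E}_{\otimes},\class{F})$ from Theorem~\ref{C-acyclics-cotorsion-pair}. Your only departures are cosmetic: you invoke part~(2) of Theorem~\ref{theorem-direct-limits} where the paper uses part~(1) plus the observation that $\leftperp{\dwclass{PI}_{\otimes}}$ contains the contractibles, and you spell out the final identification $\class{F}=\dwclass{PI}_{\otimes}$ (which the paper leaves as ``easy to see'') via the inclusion chain $\class{F}\subseteq\dwclass{PI}_{\otimes}\subseteq\rightperp{\class{E}_{\otimes}}=\class{F}$.
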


\begin{proof}
Take $(\cat{G},\class{E}) = (\cat{G},\class{P}_{\otimes})$ to be the proper class of all $\otimes$-pure short exact sequences. Take $(\class{X},\class{Y}) = (\cat{G}, \class{PI}_{\otimes})$ to be the $\otimes$-pure injective cotorsion pair. Then $\cha{X}_{\class{E_{\class{X}}}}$ is nothing more than the exact category $\cha{G}_\otimes$.  We get that $\leftperp{\dwclass{PI}_{\otimes}}$ is closed under direct limits and is thick in $\cha{G}_\otimes$. It also clearly contans all contractible complexes. Therefore, $ \langle\varinjlim\class{T}\rangle_{\cha{G}_\otimes}\subseteq \leftperp{\dwclass{PI}_{\otimes}}$. So if $\class{E}_{\otimes} \subseteq \langle\varinjlim\class{T}\rangle_{\cha{G}_\otimes}$, then it is easy to see that the complete cotorsion pair $(\class{E}_{\otimes},\class{F}_1)$ in $\cha{G}_{\otimes}$ coincides with $(\leftperp{\dwclass{PI}_{\otimes}},\dwclass{PI}_{\otimes})$.
\end{proof}

\subsection{Acyclic complexes of $\otimes$-pure quasi-coherent sheaves}

Throughout this section, we assume $\mathbb{X} = (X,\mathcal{O}_X)$ is a quasi-compact and semiseparated scheme. We let $\mathbb{X}$-Mod denote the category of quasi-coherent sheaves on $\mathbb{X}$. It is a closed symmetric monoidal category under the usual sheaf tensor product and internal hom coming from applying the \emph{coherator} to the usual sheafhom. We let $\textnormal{Ch}(\mathbb{X})$ denote the category of chain complexes of quasi-coherent sheaves on $\mathbb{X}$. It inherits the standard symmetric monoidal structure as indicated previously, before Lemma~\ref{lemma-degreewise-purity}.

\begin{theorem}\label{theorem-qc-sheaves}
Let $\class{PI}_{\otimes}$ denote the class of all $\otimes$-pure injective quasi-coherent sheaves on $\mathbb{X}$. Let $\class{E}_{\otimes}$ denote the class of all ($\otimes$-)pure acyclic complexes in $\textnormal{Ch}(\mathbb{X})$, and $\class{KF}$ denote the class of all K-flat complexes of quasi-coherent sheaves. Then:
\begin{enumerate}
\item $(\class{E}_{\otimes},\dwclass{PI}_{\otimes})$ is a complete cotorsion pair in the exact category $\textnormal{Ch}(\mathbb{X})_{\otimes}$.
\item  $(\class{KF},\exclass{PI}_{\otimes})$ is a complete cotorsion pair in the exact category $\textnormal{Ch}(\mathbb{X})_{\otimes}$. Here, $\exclass{PI}_{\otimes}$ denotes the class of all  acyclic (just exact, not necessarily $\otimes$-pure exact) complexes of $\otimes$-pure injectives.
\end{enumerate}
Consequently, $\class{D}_{\otimes\text{-pur}}(\mathbb{X}) \cong K(\class{PI}_{\otimes})$, and $\class{D}_{\textnormal{K-flat}}(\mathbb{X}) \cong K_{ac}(\class{PI}_{\otimes})$, and $\class{D}(\mathbb{X}) \cong K(\class{KF}\cap \dwclass{PI}_{\otimes})$.
\end{theorem}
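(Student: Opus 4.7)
My plan is to prove part (1) first by reducing to Corollary~\ref{corollary-direct-limits} via a {\v C}ech-resolution argument, then deduce part (2) by identifying the right orthogonal of $\class{KF}$ using Theorems~\ref{C-acyclics-cotorsion-pair} and \ref{theorem-K-flat-model-derived-cat}, and finally read off the three triangle equivalences from the bifibrant objects of the three abelian model structures already at hand.

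For part (1), Corollary~\ref{corollary-direct-limits} reduces the claim to showing that every pure acyclic complex $X$ of quasi-coherent sheaves lies in $\langle\varinjlim\class{T}\rangle_{\textnormal{Ch}(\mathbb{X})_\otimes}$, where $\class{T}$ denotes the class of contractible complexes. Imitating the argument of~\cite[Theorem~3.3]{cet-G-flat-stable-scheme}, I will fix a finite semiseparating affine open cover $\{U_0,\dots,U_n\}$ of $\mathbb{X}$. For each nonempty intersection $U_\sigma = U_{i_0}\cap\cdots\cap U_{i_k}$, semiseparatedness makes the open immersion $j_\sigma : U_\sigma\hookrightarrow \mathbb{X}$ affine, so $(j_\sigma)_*$ is exact on quasi-coherent sheaves and commutes with direct limits. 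The alternating {\v C}ech construction, applied componentwise to $X$, produces a finite coresolution
$$0 \to X \to \bigoplus_{|\sigma|=1} (j_\sigma)_*\,j_\sigma^* X \to \cdots \to \bigoplus_{|\sigma|=n+1} (j_\sigma)_*\,j_\sigma^* X \to 0.$$
Two points must then be verified: first, that this sequence is degreewise $\otimes$-pure exact, so that it genuinely lives in $\textnormal{Ch}(\mathbb{X})_\otimes$, which is a local check since the {\v C}ech sequence admits a contracting homotopy after restriction to any $U_j$ of the cover and $\otimes$-purity for quasi-coherent sheaves can be tested on stalks; and second, that each {\v C}ech term belongs to $\varinjlim\class{T}$. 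For the second point, $j_\sigma^* X$ is a pure acyclic complex on the affine $U_\sigma\cong \textnormal{Spec}(R_\sigma)$, and a module-theoretic result in the spirit of Example~\ref{example-flat-cot-pair} (see~\cite{bazzoni-cortes-estrada}) expresses any pure acyclic complex of $R_\sigma$-modules as a direct limit of contractible complexes of finitely generated projectives; the exactness and limit-preservation of $(j_\sigma)_*$ then transfer this to $\textnormal{Ch}(\mathbb{X})$. Combining these two points exhibits $X$ as an element of $\langle\varinjlim\class{T}\rangle_{\textnormal{Ch}(\mathbb{X})_\otimes}$, and Corollary~\ref{corollary-direct-limits} delivers the cotorsion pair of (1).

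For part (2), Theorem~\ref{C-acyclics-cotorsion-pair} applied with $\class{C}$ the class of all acyclic complexes already gives a complete cotorsion pair $(\class{KF},\rightperp{\class{KF}})$ in $\textnormal{Ch}(\mathbb{X})_\otimes$; the remaining task is to identify $\rightperp{\class{KF}} = \exclass{PI}_\otimes$. For the inclusion $\rightperp{\class{KF}}\subseteq \exclass{PI}_\otimes$, Theorem~\ref{C-acyclics-cotorsion-pair}(2) forces $\otimes$-pure injective components, and Lemma~\ref{lemma-K-flat-perp} then forces acyclicity, using that quasi-coherent sheaves on a qc-ss $\mathbb{X}$ possess a set of $\otimes$-flat generators. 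For the reverse inclusion, part (1) identifies the class $\class{F}_1 = \rightperp{\class{E}_\otimes}$ of Theorem~\ref{theorem-K-flat-model-derived-cat} with $\dwclass{PI}_\otimes$, so the identity $\class{E}\cap\class{F}_1 = \class{I} = \rightperp{\class{KF}}$ proved there yields $\exclass{PI}_\otimes = \class{E}\cap\dwclass{PI}_\otimes \subseteq \rightperp{\class{KF}}$.

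Finally, the three triangle equivalences follow by reading off bifibrant objects in the three cofibrantly generated abelian model structures now in hand: the model $(All,\class{E}_\otimes,\dwclass{PI}_\otimes)$ on $\textnormal{Ch}(\mathbb{X})_\otimes$, coming from Corollary~\ref{corollary-C-acyclic-models} with $\class{C}$ all complexes together with part (1), has bifibrant objects $\dwclass{PI}_\otimes$, giving $\class{D}_{\otimes\text{-pur}}(\mathbb{X}) \cong K(\class{PI}_\otimes)$; the K-flat model $(All,\class{KF},\exclass{PI}_\otimes)$ of Theorem~\ref{theorem-K-derived-category}, combined with part (2), has bifibrant objects $\exclass{PI}_\otimes$, giving $\class{D}_{\textnormal{K-flat}}(\mathbb{X}) \cong K_{ac}(\class{PI}_\otimes)$; and $\mathfrak{M}^{Kflat}_{\textnormal{der}} = (\class{KF},\class{E},\dwclass{PI}_\otimes)$ from Theorem~\ref{theorem-K-flat-model-derived-cat} has bifibrant objects $\class{KF}\cap\dwclass{PI}_\otimes$, giving $\class{D}(\mathbb{X})\cong K(\class{KF}\cap\dwclass{PI}_\otimes)$. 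The decisive obstacle is the {\v C}ech step in (1): one must simultaneously control degreewise $\otimes$-purity of the {\v C}ech resolution and show each {\v C}ech term is a direct limit of contractibles, drawing on the module-theoretic characterization of pure acyclicity and the good behaviour of affine pushforwards along semiseparating inclusions.
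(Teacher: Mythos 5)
Your overall architecture matches the paper's proof almost exactly: part (1) via a {\v C}ech coresolution feeding into Corollary~\ref{corollary-direct-limits}, part (2) by upgrading the Hovey triple of Theorem~\ref{theorem-K-flat-model-derived-cat} to $(\class{KF},\class{E},\dwclass{PI}_{\otimes})$ and reading off $\rightperp{\class{KF}} = \class{E}\cap\dwclass{PI}_{\otimes} = \exclass{PI}_{\otimes}$, and the equivalences from bifibrant objects. The handling of degreewise $\otimes$-purity of the {\v C}ech sequence (local contracting homotopy plus the stalkwise/affine-local characterization of geometric purity from \cite[Prop.~2.10]{estrada-gillespie-odabasi}) is also the paper's argument.

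There is, however, one genuinely false step in your treatment of the {\v C}ech terms. You claim that any pure acyclic complex of $R_\sigma$-modules is a direct limit of contractible complexes \emph{of finitely generated projectives}, attributing this to the circle of ideas around Example~\ref{example-flat-cot-pair} and \cite{bazzoni-cortes-estrada}. That statement is false: the components of a direct limit of complexes of finitely generated projectives are necessarily flat (Lazard), so for instance the pure acyclic (indeed contractible) complex $D^0(M)$ for a non-flat module $M$ admits no such presentation. The result of \cite{bazzoni-cortes-estrada} concerns acyclic complexes of \emph{flat} modules with flat cycles, which is a different class. What your argument actually needs, and what the paper invokes, is Emmanouil's theorem \cite[Prop.~2.2]{emmanouil-pure-acyclic-complexes}: a pure acyclic complex of modules is a direct limit of contractible complexes, with no constraint on the components. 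Since $\class{T}$ in Corollary~\ref{corollary-direct-limits} is the class of \emph{all} contractible complexes, this weaker (and true) statement suffices, and the rest of your proof goes through unchanged once the citation is corrected. One further small point worth making explicit in part (2): the appeal to Lemma~\ref{lemma-K-flat-perp} and Theorem~\ref{theorem-K-flat-model-derived-cat} requires that quasi-coherent sheaves on a quasi-compact semiseparated scheme admit a set of $\otimes$-flat generators; you do flag this, and it is indeed the standing hypothesis that makes Section~\ref{sec-flat-generators} applicable here.
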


\begin{proof}
We prove (1) by imitating the argument from~\cite[Theorem~3.3]{cet-G-flat-stable-scheme}, making the necessary adjustments. Let $Y$ be a complex of $\otimes$-pure injective quasi-coherent sheaves. It is enough to show $\Ext^1_{\otimes}(F,Y) = 0$ for all $\otimes$-pure acyclic complexes of quasi-coherent sheaves $F$, as, by~\cite[Corollary 3.10]{estrada-gillespie-odabasi} (or Theorem \ref{C-acyclics-cotorsion-pair}), the containment $\class{E}_{\otimes}^\perp\subseteq \dwclass{PI}_{\otimes}$ always holds. Let $\mathcal{U} = \{U_0, U_1, \cdots , U_d\}$ be a semiseparating open affine covering of $X$ and consider the usual {\v{C}}ech resolution (\cite[Section III.4]{hartshorne}), extended in a degreewise fashion to chain complexes,
\begin{equation}\label{equation-cech-res}
0 \xrightarrow{} F \xrightarrow{\epsilon} \class{C}^0(\mathcal{U}, F) \xrightarrow{} \class{C}^1(\mathcal{U}, F) \xrightarrow{} \cdots \xrightarrow{} \class{C}^d(\mathcal{U}, F) \xrightarrow{} 0.
\end{equation}
Here, $\class{C}^p(\mathcal{U}, F) := \bigoplus_{j_0<j_1<\cdots <j_p}i_*(\widetilde{F(U_{{j_0},{j_1},\cdots,{j_p}})})$, where $j_0<j_1<\cdots <j_p$ ranges over sequences of length $p+1$ in $\{0,1,\cdots , d\}$, and  $U_{{j_0},{j_1},\cdots,{j_p}} := U_{j_0}\cap U_{j_1}\cap\cdots U_{j_p}$, and $i : U_{{j_0},{j_1},\cdots,{j_p}} \xrightarrow{} X$ denotes the inclusion of the open affine into the underlying space of $\mathbb{X}$.  (We are picturing each {\v{C}}ech sheaf complex $\class{C}^p(\mathcal{U}, F)$  in (\ref{equation-cech-res}) as a vertical chain complex.) We note that each such $\class{C}^p(\mathcal{U}, F)$ is exact because $F$ is exact; see~\cite[Section 3.1]{murfet-thesis}. In fact, by~\cite[Prop.~2.10 and the  proceeding remark]{estrada-gillespie-odabasi}, for any tuple of indices, $j_0<j_1<\cdots <j_p$, the complex  $F(U_{{j_0},{j_1},\cdots,{j_p}})$ is a pure acyclic complex of $\mathcal{O}_X(U_{{j_0},{j_1},\cdots,{j_p}})$-modules. As such, each  is isomorphic to a direct limit $$F(U_{{j_0},{j_1},\cdots,{j_p}}) \cong \varinjlim_{\lambda \in \Lambda} W_{\lambda}^{U_{{j_0},{j_1},\cdots,{j_p}}},$$ of contractible chain complexes, $W_{\lambda}^{U_{{j_0},{j_1},\cdots,{j_p}}}$, of $\mathcal{O}_X(U_{{j_0},{j_1},\cdots,{j_p}})$-modules,  by~\cite[Prop.~2.2]{emmanouil-pure-acyclic-complexes}. The direct image functor $i_*$ preserves direct limits and contractible complexes, so it follows that each $i_*(\widetilde{F(U_{{j_0},{j_1},\cdots,{j_p}})})$ is a direct limit of contractible complexes of quasi-coherent sheaves. Letting $\class{T}$ denote the class of all contractible complexes of quasi-coherent sheaves, it means that each $i_*(\widetilde{F(U_{{j_0},{j_1},\cdots,{j_p}})}) \in \varinjlim\class{T}$. It follows that each {\v{C}}ech sheaf complex $\class{C}^p(\mathcal{U}, F)\in \varinjlim\class{T}$ too.
As for the horizontal (degreewise) exactness of  the sequence in (\ref{equation-cech-res}), by construction it is, upon restriction to each open affine $U_i$, degreewise split exact. So it follows from~\cite[Prop.~2.10]{estrada-gillespie-odabasi} that the sequence of complexes in (\ref{equation-cech-res}) is exact relative to the exact structure $\textnormal{Ch}(\mathbb{X})_{\otimes}$. This means that $F \in \langle\varinjlim\class{T}\rangle_{\textnormal{Ch}(\mathbb{X})_{\otimes}}$, and turning to  Corollary~\ref{corollary-direct-limits}, we conclude that $(\class{E}_{\otimes},\dwclass{PI}_{\otimes})$ is a complete cotorsion pair in $\textnormal{Ch}(\mathbb{X})_{\otimes}$.

The second statement follows readily: See the first paragraph of the proof of Theorem~\ref{theorem-K-flat-model-derived-cat}. In fact, we have now shown $\mathfrak{M}^{Kflat}_{\textnormal{der}} := (\class{KF}, \class{E}, \class{F}_1) = (\class{KF}, \class{E}, \dwclass{PI}_{\otimes})$ is a Hovey triple. So $\exclass{PI}_{\otimes} = \class{E}\cap\dwclass{PI}_{\otimes} = \rightperp{\class{KF}}$, in $\textnormal{Ch}(\mathbb{X})_{\otimes}$.
\end{proof}

\begin{remark}\label{remark-CET}
Continuing as in Example~\ref{example-flat-cot-pair}, the extension of~\cite[Theorem~5.3]{bazzoni-cortes-estrada} to complexes of quasi-coherent sheaves was made in~\cite[Theorem~3.3]{cet-G-flat-stable-scheme} (see also \cite[Theorem 7.2]{posit-stov-flat-sheaves}).
We note that the argument in the above proof is just a slight modification of the {\v{C}}ech resolution from their proof. In fact, if the complex $F$ in our proof above has flat components, then the argument is identical to theirs and shows $F \in \langle\varinjlim\class{T}\rangle_{\cha{F}}$, where $\class{T}$ is the class of all contractible complexes of flat quasi-coherent sheaves.
That is, we recover~\cite[Theorem~3.3]{cet-G-flat-stable-scheme} from Theorem~\ref{theorem-direct-limits} applied to the flat cotorsion pair $(\class{F},\class{C})$ in $\mathbb{X}$-Mod in a manner analogous to Example~\ref{example-flat-cot-pair} and the above proof of Theorem~\ref{theorem-qc-sheaves}.
\end{remark}

\subsection{The Affine Case: Equivalence of the Verdier quotients by K-flats and K-absolutely pures}
Here we let $R$ be a ring and $\ch$ denote the category of chain complexes of $R$-modules. $R$ need not be commutative so we consider, say, left $R$-modules. Note that our model structure, $\mathfrak{M}_{\class{KF}} = (All, \class{KF},\exclass{PI})$ on $\ch_{\otimes}$, is \emph{injective} in the sense that it is abelian and every object is cofibrant. It is interesting to note that there is another model structure on $\ch_{\otimes}$, having complexes of pure-projectives as the cofibrant objects, but whose homotopy category is also the K-flat derived category, $\class{D}_{\textnormal{K-flat}}(R) := K(R)/\class{KF}$. Moreover, this model shows that $\class{D}_{\textnormal{K-flat}}(R) $ is equivalent to the Verdier quotient of $K(R)$ by Emmanouil and Kaperonis' \emph{K-absolutely pure} complexes, introduced in~\cite{emmanouil-kaperonis-K-flatness-pure}.

Throughout this section we use the following notation for classes of chain complexes in $\ch$:
\begin{itemize}
\item $\class{E}$ denotes the class of all exact (acyclic) complexes.
\item  $\class{E}_{\otimes}$ denotes the class of all pure acyclic complexes.
\item $\dwclass{PI}$ denotes the class of all complexes that are degreewise pure injective, that is, each component is a pure injective  $R$-module.
\item $\dwclass{PP}$ denotes the class of all complexes that are degreewise pure projective, that is, each component is a pure projective $R$-module.
\item $\class{KF}$ denotes the class of all K-flat complexes.
\item $\class{KA}$ denotes the class of all K-absolutely pure complexes in the sense of~\cite{emmanouil-kaperonis-K-flatness-pure}. (This class was denoted by $\class{V}$ in~\cite{gillespie-ac-pure-proj}.)
\item $\class{F}$ denotes the class of all K-injective complexes with pure injective components; complexes in $\class{F}$ were called \emph{DG-pure injective} in~\cite{gillespie-K-flat}.
\item $\class{C}$ denotes the class of all K-projective complexes with pure projective components;  complexes in $\class{C}$ were called \emph{DG-pure projective} in~\cite{gillespie-ac-pure-proj}.
\end{itemize}

The key to the following result is that, by~\cite[Corollary~3.4]{emmanouil-relation-K-flatness-K-projectivity}, K-flat complexes with pure projective components must also be K-projective. Similarly, K-absolutely pure complexes with pure injective components must be K-injective by~\cite[Prop.~2.3]{emmanouil-kaperonis-K-flatness-pure}.
The proof of the next result will include simple cotorsion theoretic proofs of these two facts.

\begin{proposition}\label{prop-K-flat-K-abspure-rings}
Let $R$ be a ring, and $\ch_{\otimes} \ (= \ch_{pur})$ be the exact category of chain complexes along with the degreewise pure exact structure.
\begin{enumerate}
\item $(\dwclass{PP},\class{KF},\class{E})$ is a cofibrantly generated abelian model structure on $\ch_{\otimes}$. This provides a triangulated equivalence of the Verdier quotient $$K(R)/\class{KF} \cong K_{ac}(\class{PP}),$$ onto the full subcategory $K_{ac}(\class{PP}) \subseteq K(R)$ generated by the class of all acyclic complexes of pure projectives.

\item $(\class{E},\class{KA},\dwclass{PI})$ is a cofibrantly generated abelian model structure on $\ch_{\otimes}$. This provides a triangulated equivalence of the Verdier quotient $$K(R)/\class{KA} \cong K_{ac}(\class{PI}),$$ onto the full subcategory $K_{ac}(\class{PI}) \subseteq K(R)$ generated by the class of all acyclic complexes of pure injectives.
\end{enumerate}
Consequently, the Verdier quotient $K(R)/\class{KF}$ is equivalent to the Verdier quotient $K(R)/\class{KA}$.
\end{proposition}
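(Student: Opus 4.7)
The plan is to verify each of the Hovey triples in (1) and (2) by producing its two associated complete cotorsion pairs on the pure exact category $\ch_\otimes$ via the Hovey correspondence, and then to derive the consequence from Theorems~\ref{theorem-K-derived-category} and \ref{theorem-qc-sheaves} specialized to $\cat{G}=\rmod$ (so that categorical and $\otimes$-purity coincide, and $R$ itself furnishes a $\otimes$-flat generator).

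For part (1), thickness of $\class{KF}$ in $\ch_\otimes$ is Proposition~\ref{prop-C-acyclic-properties}(2) with $\class{C}$ the class of acyclic complexes. The two cotorsion pairs required by Hovey are $(\dwclass{PP},\class{KF}\cap\class{E})$ and $(\dwclass{PP}\cap\class{KF},\class{E})$. By Proposition~\ref{prop-K-flat-acyclic}, $\class{KF}\cap\class{E}=\class{E}_\otimes$; and by \cite[Corollary~3.4]{emmanouil-relation-K-flatness-K-projectivity}, $\dwclass{PP}\cap\class{KF}=\class{C}$, since a K-flat complex with pure projective components is K-projective. So the pairs needed are $(\dwclass{PP},\class{E}_\otimes)$ and $(\class{C},\class{E})$. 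The second is the DG-pure projective cotorsion pair of \cite{gillespie-ac-pure-proj}. For the first, I would adapt the proof of Lemma~\ref{lemma-injective-cot-pair}: using that $\rmod$ has a set $\{P_j\}$ of pure projective generators, the set $\{S^n(P_j),D^n(P_j)\}_{n\in\Z}$ cogenerates a complete cotorsion pair in $\ch_\otimes$, whose right side one identifies with $\class{E}_\otimes$ by a routine cycle-shifting computation (the $D^n(P_j)$ orthogonality forces $\otimes$-pure exactness in each degree, and the $S^n(P_j)$ orthogonality then forces the cycles to be pure acyclic). Both pairs are cogenerated by sets, so the model is cofibrantly generated; the triangulated equivalence $K(R)/\class{KF}\cong K_{ac}(\class{PP})$ follows from standard Hovey theory, the bifibrant objects being exactly $\dwclass{PP}\cap\class{E}$.

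Part (2) is dual. Thickness of $\class{KA}$ in $\ch_\otimes$ can be established by a $\class{C}$-acyclic-style argument (of $\Hom$ rather than $\otimes$ type) or by citing \cite{emmanouil-kaperonis-K-flatness-pure} directly. The cotorsion pairs required are $(\class{E},\class{KA}\cap\dwclass{PI})$ and $(\class{E}\cap\class{KA},\dwclass{PI})$. By \cite[Proposition~2.3]{emmanouil-kaperonis-K-flatness-pure}, $\class{KA}\cap\dwclass{PI}=\class{F}$, the class of K-injective complexes of pure injectives, so the first is the DG-pure injective cotorsion pair $(\class{E},\class{F})$ of \cite{gillespie-K-flat}. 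For the second, the analog of Proposition~\ref{prop-K-flat-acyclic} for K-absolutely pures gives $\class{E}\cap\class{KA}=\class{E}_\otimes$, and $(\class{E}_\otimes,\dwclass{PI})$ is precisely Theorem~\ref{theorem-qc-sheaves}(1) specialized to $\mathrm{Spec}(R)$. Hovey then delivers $K(R)/\class{KA}\cong K_{ac}(\class{PI})$. I expect the main obstacle in the whole argument to be the cotorsion pair $(\dwclass{PP},\class{E}_\otimes)$ in (1): its existence and the identification of its right side rest on dualizing Lemma~\ref{lemma-injective-cot-pair} and combining it with Emmanouil's description of pure acyclic complexes as directed colimits of contractibles of pure projectives. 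For the final consequence, apply Theorem~\ref{theorem-K-derived-category}(3) to $\cat{G}=\rmod$: by Theorem~\ref{theorem-qc-sheaves}(2) specialized to $\mathrm{Spec}(R)$, the fibrant class $\class{I}$ in the K-flat model equals $\exclass{PI}$, so $K(R)/\class{KF}\cong K(\class{I})=K_{ac}(\class{PI})\cong K(R)/\class{KA}$ by part (2).
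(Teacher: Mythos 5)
Your overall architecture coincides with the paper's: both proofs verify the two Hovey triples by exhibiting the same four complete cotorsion pairs $(\dwclass{PP},\class{E}_\otimes)$, $(\class{C},\class{E})$, $(\class{E},\class{F})$, $(\class{E}_\otimes,\dwclass{PI})$ and then identifying the intersections $\dwclass{PP}\cap\class{KF}=\class{C}$, $\class{KF}\cap\class{E}=\class{E}_\otimes$, $\class{KA}\cap\dwclass{PI}=\class{F}$, $\class{E}\cap\class{KA}=\class{E}_\otimes$. The differences are in sourcing, and two of them are worth flagging. First, you import $\dwclass{PP}\cap\class{KF}=\class{C}$ from Emmanouil's \cite[Cor.~3.4]{emmanouil-relation-K-flatness-K-projectivity}, whereas the paper deliberately reproves it (and its dual $\class{F}=\class{KA}\cap\dwclass{PI}$) by a short cotorsion-theoretic argument: given $X\in\dwclass{PP}\cap\class{KF}$, take a special $\class{C}$-precover $0\to E\to P\to X\to 0$, use thickness of $\class{KF}$ to see $E\in\class{KF}\cap\class{E}=\class{E}_\otimes$, and conclude the sequence splits because $\Ext^1_\otimes(\dwclass{PP},\class{E}_\otimes)=0$; your citation is logically fine but forgoes what the paper advertises as a feature of this proposition. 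Second, and more substantively, your treatment of $(\dwclass{PP},\class{E}_\otimes)$ undersells the difficulty: the ``routine cycle-shifting computation'' correctly identifies $\rightperp{\{S^n(P_j)\}}$ with the pure acyclic complexes, but the cotorsion pair you cogenerate is then $(\leftperp{\class{E}_\otimes},\class{E}_\otimes)$, and showing $\leftperp{\class{E}_\otimes}=\dwclass{PP}$ requires the nontrivial vanishing $\Ext^1_\otimes(X,Y)=0$ for every complex $X$ of pure projectives and every pure acyclic $Y$ (equivalently, every chain map from such an $X$ to a pure acyclic complex is null homotopic). Emmanouil's description of pure acyclics as directed colimits of contractibles does not give this by itself, since $\Ext^1$ and homotopy classes of maps do not commute with direct limits in the second variable; this is a genuine theorem, which the paper simply cites as \cite[Cor.~4.2]{gillespie-ac-pure-proj} rather than reconstructing. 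Your use of Theorem~\ref{theorem-qc-sheaves}(1) at $\mathrm{Spec}(R)$ for $(\class{E}_\otimes,\dwclass{PI})$, and your closing chain through $K_{ac}(\class{PI})$ via Theorem~\ref{theorem-K-derived-category}(3), are both valid alternatives to the paper's chain $K_{ac}(\class{PI})\cong K(R)/\class{KF}\cong K_{ac}(\class{PP})\cong K(R)/\class{KA}\cong K_{ac}(\class{PI})$, and arguably stay closer to the paper's own machinery.
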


\begin{proof}
For (1), we already have that $(\dwclass{PP},\class{KF}\cap\class{E}) = (\dwclass{PP},\class{E}_{\otimes})$ is a complete cotorsion pair in $\ch_{\otimes}$, and cogenerated by a set; see~\cite[Cor.~4.2]{gillespie-ac-pure-proj}. Also, $(\class{C},\class{E})$ is a complete cotorsion pair in $\ch_{\otimes}$, and cogenerated by a set, by~\cite[Prop.~6.3]{gillespie-ac-pure-proj}. So we only need to show Emmanouil's relation  $\dwclass{PP}\cap\class{KF} = \class{C}$.

\noindent $(\supseteq)$ This is easy to see.

\noindent $(\subseteq)$ Let $X \in \dwclass{PP}\cap\class{KF}$. Using completeness of $(\class{C},\class{E})$, write a degreewise pure short exact sequence $0 \xrightarrow{} E \xrightarrow{} P \xrightarrow{}X \xrightarrow{} 0$ with $E \in \class{E}$ and $P \in \class{C}$. Since $\class{C} \subseteq \class{KF}$, we have $P \in \class{KF}$. Since $\class{KF}$ is thick in $\ch_{\otimes}$, we have $E \in \class{KF}\cap\class{E} = \class{E}_{\otimes}$. The short exact sequence represents an element of $\Ext^1_{\otimes}(X,E)$. But this Ext group must vanish since $(\dwclass{PP},\class{E}_{\otimes})$ is a cotorsion pair in $\ch_{\otimes}$. Hence the short exact sequence must split, making $X$ is a direct summand of $P$, thus forcing $X$ to also be in $\class{C}$. This completes the proof of (1).

The proof of (2) is dual: The dual results concerning complete cotorsion pairs were already shown in~\cite{gillespie-K-flat}, and $\class{F} = \class{KA}\cap\dwclass{PI}$ follows by a straightforward dual of the above argument.

Finally, we observe that $$K_{ac}(\class{PI}) \cong K(R)/\class{KF} \cong K_{ac}(\class{PP}) \cong K(R)/\class{KA} \cong K_{ac}(\class{PI})$$ The first equivalence is from~\cite{emmanouil-K-flatness-and-orthogonality-in-homotopy-cats}, the second equivalence comes from what we just showed in (1), the third equivalence is from~\cite[Prop.~2.3]{emmanouil-kaperonis-K-flatness-pure}, and the last equivalence follows from statement (2).
\end{proof}

\section*{Acknowledgements}
This paper was advanced during a visit of the second author to the  Univeridad de Murcia. He thanks his coauthors and his host, Sergio Estrada, for his gracious hospitality.

\end{document}